\documentclass[12pt,reqno]{amsart}

\usepackage{mathrsfs}
\usepackage{amsfonts}
\usepackage{amssymb}
\usepackage{amsthm}
\usepackage{amsmath}
\usepackage{latexsym}
\usepackage{tikz}
\usepackage{cite}
\allowdisplaybreaks

\setlength{\textheight}{8.8in} \setlength{\textwidth}{5.9in}
\setlength{\oddsidemargin}{0.2in} \setlength{\evensidemargin}{0.2in}
\setlength{\parindent}{0.2in}
\setlength{\topmargin}{0.2in} \setcounter{section}{0}
\setcounter{figure}{0} \setcounter{equation}{0}

\newtheorem{theorem}{Theorem}[section]
\newtheorem{proposition}[theorem]{Proposition}
\newtheorem{lemma}[theorem]{Lemma}

\voffset=-0.2in
\numberwithin{equation}{section}
%\makeatletter \@addtoreset{equation}{section} \makeatother
%\renewcommand{\theequation}{\thesection.\arabic{equation}}

%%%%% Macros
%\newcommand{\pp}{\partial_+}
%\newcommand{\pn}{\partial_-}
%\def\tilde{\widetilde}

\newcommand{\beq}{\begin{equation}}
\newcommand{\eeq}{\end{equation}}

\newcommand{\Rmnum}[1]{\expandafter\@slowromancap\romannumeral #1@}

\newcommand{\p}{\partial}

\newcommand{\ben}{\begin{eqnarray}}
\newcommand{\een}{\end{eqnarray}}
\newcommand{\beno}{\begin{eqnarray*}}
\newcommand{\eeno}{\end{eqnarray*}}
%%%%% End of macros

\begin{document}

\title[Global well-posedness of 2D MHD equations]
{Global well-posedness of 2D incompressible MHD equations without magnetic diffusion}

\author[S. Ding]{Shijin Ding}

\address{School of Mathematical Sciences, South China Normal University, Guangzhou, 510631, P.R. China}

\email{dingsj@scnu.edu.cn}

\author[R. Pan]{Ronghua Pan}

\address{School of Mathematics, Georgia Institute of Technology, Atlanta, GA 30332, USA}

\email{panrh@math.gatech.edu}

\author[Y. Zhu]{Yi Zhu}

\address{Department of Mathematics, East China University of Science and Technology, Shanghai 200237,  P.R. China}

\email{zhuyim@ecust.edu.cn}

\date{}
\subjclass[2010]{}
\keywords{2D magnetohydrodynamic equations, non-resistive system, global well-posedness}

\begin{abstract}
In recent years, the global
existence of classical solutions to the Cauchy problem for 2D incompressible viscous MHD equations without magnetic diffusion has been proved in \cite{Ren,TZhang}, under the assumption that  initial data is close to equilibrium states with nontrivial magnetic field, and the perturbation is small in some suitable spaces, say for instance, the Sobolev spaces with negative exponents. It leads to an interesting open question: Can one establish the global existence of classical solutions without the extra help from Sobolev spaces with negative exponents like its counterparts of ideal MHD ( i.e. without viscosity and magnetic diffusion), and fully dissipative MHD (i.e. with both viscosity and magnetic diffusion)? This paper offers an affirmative answer to this question. In fact, we will establish the existence of a global unique solution for initial perturbations being small  in  $H^2(\mathbb{R}^2)$. The key idea is further exploring the structure of system, using dispersive effects of Alfv\'{e}n waves in the direction which is transversal to the dissipation favorable direction. This motivates our key strategy to treat the wildest nonlinear terms as an artificial linear term. These observations help us to construct some interesting quantities which improves the nonlinearity order for  the wildest terms, and to control them by terms with better properties.
\end{abstract}

\maketitle

\section{introduction}
The dynamics of electrically conducting fluids interacting with magnetic fields can be modelled by the equations of magnetohydrodynamics (MHD).
This paper deals with the following Cauchy problem of 2D incompressible viscous and non-resistive (without magnetic diffusion) MHD equations on the whole space:
\begin{equation}\label{mhd}
  \begin{cases}
    u_t + u \cdot \nabla u - \Delta u + \nabla P = B \cdot \nabla B, \; x \in \mathbb{R}^2, \; t > 0, \\
    B_t + u \cdot \nabla B  = B \cdot \nabla u, \\
    \nabla \cdot u = \nabla \cdot B = 0, \\
    (u,B)|_{t = 0} = (u_0(x), B_0(x)).
  \end{cases}
\end{equation}
Here $u=(u_1(x,t), u_2(x,t))^{\top}$ denotes the velocity field,  $B=(B_1(x,t), B_2(x,t))^{\top}$ the magnetic field, and $P$ the total pressure.
The velocity field obeys the Navier-Stokes equations with Lorentz force. The magnetic field satisfies the non-resistive Maxwell-Faraday equations which describe the Faraday's law of induction.

The MHD equations reflect the basic physics laws governing the motion of electrically conducting fluids such as plasmas, liquid metals and electrolytes. It has played pivotal roles in the study of many phenomena in geophysics, astrophysics, cosmology and engineering (see, e.g., \cite{DL, PF, bis, priest}). Mathematically, it not only shares many crucial features with the Euler or the Navier-Stokes equations but also exhibits many more fascinating properties. These distinctive features make analytic studies a great challenge but offer new opportunities.

We shall briefly recall some of the closely related works about incompressible MHD equations to place our result in a suitable context.  One of the fundamental problems is to establish the global existence of smooth solutions when initial data are small perturbation near equilibrium states with nontrivial background magnetic field. This phenomenon, interpreted in physics, is the stabilization effects of the magnetic field, joint with other dissipative effects such as viscosity and magnetic diffusion if they are taken into accounts. 
It is a long standing open problem that whether or not classical solutions of \eqref{mhd} can develop finite time singularities even in the 2D case. Except with full magnetic diffusion in \eqref{mhd}, the corresponding 2D system possesses a unique global smooth solution (see \cite{ap, st} for initial data in the critical spaces).
For ideal MHD equations, where no viscosity or  magnetic diffusion is included in the equations,  such a problem was solved  in several beautiful papers \cite{BSS, CaiLei, HeXuYu, WeiZ}. The dispersive effect of Alfv\'{e}n waves plays a central role. For the problem of viscous MHD equations without magnetic diffusion, using the dissipative effects of viscosity, global regularity and stability result was first studied in \cite{LinZhang1}, which inspired many further investigations. This result, has been further extended and improved  by several authors via different approaches (see, e.g., \cite{AZ, Deng, HuWang, PanZhouZhu, Ren, Tan, WuWu, WuWuXu, TZhang, xuzhang, zhangting}).
For the inviscid and resistive MHD equations, global $H^1$ weak solution results are obtained in \cite{caowu, leizhou}. We also refer to \cite{zhouzhu, weizhang, zhang2} for more relevant results.
To help readers to gain  a more complete references of current studies on this problem,  we refer to  some other exciting results in
\cite{CaoWuYuan, DongLiWu1, Fefferman1,Fefferman2, FNZ, HuangLi, JNW, JiuZhao2,Yam2, lisun, duzhou, lty, qyyz, jj, jiang, jwz, lwz, zz, liyang, agx, caiyuan, jjz, xjl} and the references therein.

In this paper, we will focus on viscous MHD without resistivity (magnetic diffusion) (\ref{mhd})  in two space dimensions. This system is not merely a combination of the Navier-Stokes on fluid velocity and the transport equations on magnetic field, it is rather
an interactive and integrated system through strong nonlinear coupling between fluid velocity and magnetic fileds.
It is clear that a special solution of \eqref{mhd} is given by the zero velocity field and the nontrivial constant  background magnetic field $e_0 = (0, 1)^T$. The perturbation $(u, b)$ around this equilibrium with $b = B - e_0$ obeys
\begin{equation}\label{mhd0}
  \begin{cases}
    u_t + u \cdot \nabla u - \Delta u + \nabla P = b \cdot \nabla b + \p_2 b, \; x \in \mathbb{R}^2, \; t > 0, \\
    b_t + u \cdot \nabla b  = b \cdot \nabla u + \p_2 u, \\
    \nabla \cdot u = \nabla \cdot b = 0, \\
    (u,b)|_{t = 0} = (u_0(x), b_0(x)).
  \end{cases}
\end{equation}
The absence of magnetic diffusion poses a major challenge in proving the existence of global classical solutions, because it is difficult to control the Lorentz force in the momentum equations. Although at first glance the viscosity seems to provide dissipation to velocity in all directions and no dissipation on magnetic field, the strong nonlinear coupling with magnetic field actually weakened and re-distributed dissipation of viscosity in the sense that the actual dissipation is happening in the favorable direction ($x_2$-direction), but for both velocity and magnetic field. In fact, the linearized system for both velocity $u$ and magnetic field $b$ share the exact same major parts, and can be modeled by the following problem
\begin{equation}\label{linm}
w_{tt}-\Delta w_t-\partial_2^2 w=0, \quad w(0)=w_0, \quad w_t(0)=w_1.\end{equation}
This prevents the  desired bound on $\|\nabla u\|_{L^1_t L_x^\infty}$ from reaching. Experiences gained from past seminar works suggest that one can explore the stabilization effects of the nontrivial background magnetic field to obtain dissipation of magnetic field in the favorable $x_2$-direction, i.e.,
$$ \int_0^t \|\p_2 b(\cdot,\tau)\|_{H^{s-1}(\mathbb{R}^2)}^2 \; d\tau \leq C (\|u_0\|_{H^s(\mathbb{R}^2)}^2 + \|b_0\|_{H^s(\mathbb{R}^2)}^2).$$
for some suitable $s > 0$.
However,  when constructing energy frame to achieve uniform estimates, with the help in mind that higher order nonlinearity improves time decay and smallness of the objects, and with the help of incompressibility, one will soon encounter the following wildest term from nonlinear coupling,
$$\int_0^t \int_{\mathbb{R}^2} \p_2 u_2 |\p_1^s b_2|^2 (x,\tau)\; dx \; d\tau .$$
Recall that we do not have  control on $\|\nabla u\|_{L^1_t L_x^\infty}$, it seems not possible to achieve uniform in time bound on this term unless some helping hands, such as a little faster decay in some terms, say $\p_2 u_2$, are available. One possible ways to achieve this is to ask a little more from the initial data, say, better regularity for low frequency part especially in $x_2$ direction. Indeed, following the explanation of \cite{Ren}, one can solve \eqref{linm} by Fourier transform,
$${\hat w}(t, \xi)=a_+(\xi)e^{t\lambda_+(\xi)}+a_-(\xi)e^{t\lambda_-(\xi)},$$
 $$\lambda_{\pm}(\xi)=-\frac{|\xi|^2}{2}(1\pm\sqrt{1-\frac{4|\xi_2|^2}{|\xi|^4}}),$$
  $$a_{\pm}=\pm\frac{\lambda_{\mp}(\xi){\hat u_0}(\xi)-{\hat u}_1(\xi))}{|\xi|^2\sqrt{1-\frac{4|\xi_2|^2}{|\xi|^4}}}.$$
  For $|\xi|\to+\infty$,
  $$\lambda_-(\xi)\approx -\frac{\xi_2^2}{|\xi|^2}\to 0, \ for \ |\xi|\gg |\xi_2|.$$
  Therefore, one finds the ``best possible" decay to be:
  if $(w_0)\in H^1(\mathbb{R}^2)$, $(|D_2|^{-\sigma}(w_0))\in H^{1+\sigma}(\mathbb{R}^2)$ for $\sigma>0$, then
  $$\|w(t)\|_{L^2(\mathbb{R}^2)}\le C(1+t)^{-\frac{\sigma}{2}}.$$
 Therefore, if one hopes to obtain some time decay in the solution of \eqref{mhd0} to help in the estimates,  it seems to be a natural idea to ask initial data to be in certain anisotropic Sobolev space with negative exponents. \cite{Ren} successfully carried out this idea, and proved the existence of global classical solutions to MHD equations \eqref{mhd0} through anisotropic type estimate with the following initial data
$$ \|(b_0, u_0)\|_{H^8} + \||D|^{-\frac{1}{2} + \epsilon} |D_2|^{-\frac{1}{2} + \epsilon}(b_0, u_0)\|_{L^2} + \||D|^{8} |D_2|^{-\frac{1}{2} + \epsilon} (b_0, u_0)\|_{L^2} \ll 1,$$
for arbitrarily small $\epsilon>0$. Under similar considerations, another interesting work is given by \cite{TZhang}, in which the author made full use of the structure of \eqref{mhd0} and offered a simple proof of this problem with the following initial data
\begin{equation}\label{phase-data}
\|(u_0, b_0)\|_{H^2} + \|e^{-|\xi|^2 t} (\hat b_0, \hat u_0)\|_{L^2([0, \infty); L_{\xi}^1)} \ll 1.
\end{equation}
In fact, the initial assumption $\|(u_0, b_0)\|_{\dot H^{- \sigma_0} \cap \dot H^2} \ll 1$ implies \eqref{phase-data} as long as $\sigma_0 > 0$.

One of the major puzzles in this problem is whether this kind of ``extra" condition on initial data, that is the extra regularity in certain anisotropic Sobolev space with negative exponents, is necessary for the global existence of strong or classical solutions to \eqref{mhd0}. Philosophically speaking, our problem sits in the middle of ideal MHD and fully dissipative MHD where global existence of classical solution are obtained for initial data $(u_0, b_0)$ only be small in Sobolev space $H^s(\mathbb{R}^2)$ for some $s>0$. Therefore, it is natural to ask: {\it Is it possible to establish the global existence of classical solution to \eqref{mhd0} when initial data $(u_0, b_0)$ only be small in Sobolev space $H^s(\mathbb{R}^2)$ (for some $s > 0$) without any negative Sobolev norms requirement?}
 We will address this problem and offer an affirmative answer in this paper. Our main result is stated below.

\begin{theorem}\label{thm}
Consider the Cauchy problem\eqref{mhd0} with initial data $(u_0, b_0) \in H^2({\mathbb{R}^2})$ satisfying $\nabla \cdot u_0 = \nabla \cdot b_0 = 0$. There exists a constant $\epsilon > 0$ such that, if
\begin{equation}\label{initial}
  \|u_0\|_{H^2({\mathbb{R}^2})} + \|b_0\|_{H^2({\mathbb{R}^2})} \leq \epsilon,
\end{equation}
then \eqref{mhd0} has a unique global classical solution $(u, b)$ satisfying, for any $t > 0$,
\begin{equation}\nonumber
\begin{split}
\|u(t)\|_{H^2({\mathbb{R}^2})}^2 + \|b(t)\|_{H^2({\mathbb{R}^2})}^2 + \int_0^t \Big( \|\nabla u(\tau)\|_{H^2({\mathbb{R}^2})}^2 + \|\p_2 b(\tau)\|_{H^1({\mathbb{R}^2})}^2 \Big)\; d\tau \leq& \epsilon.
\end{split}
\end{equation}
\end{theorem}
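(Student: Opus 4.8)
The plan is to prove Theorem~\ref{thm} by combining a standard local existence theorem with a global-in-time a priori estimate closed through a continuity (bootstrap) argument. Introduce the energy and dissipation functionals
\[
E(t):=\|u(t)\|_{H^2(\R^2)}^2+\|b(t)\|_{H^2(\R^2)}^2,\qquad
D(t):=\|\na u\|_{H^2(\R^2)}^2+\|\p_2 b\|_{H^1(\R^2)}^2,
\]
and aim to show $\sup_{[0,t]}E+\int_0^t D\le C\epsilon^2$. Local existence (with a continuation criterion) can be produced by a Friedrichs/mollification scheme, so the whole game is the a priori bound, carried out under the bootstrap hypothesis $\sup E\le\delta$ for a small $\delta$ to be fixed at the end.

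First I would derive the natural $H^2$ energy identity: apply $\p^\alpha$ for $|\alpha|\le 2$ to \eqref{mhd0}, pair with $(\p^\alpha u,\p^\alpha b)$ and sum. The linear coupling $\p_2 b\cdot u+\p_2 u\cdot b$ integrates to zero, the transport terms vanish by $\na\cdot u=0$, and one obtains
\[
\tfrac12\ddt\big(\|u\|_{H^2}^2+\|b\|_{H^2}^2\big)+\|\na u\|_{H^2}^2=\mathcal N(t),
\]
where $\mathcal N$ collects the commutators from $u\cdot\na$ and the magnetic terms $b\cdot\na b$, $b\cdot\na u$. This carries no dissipation for $b$, so I would next manufacture magnetic dissipation from the background field via a hypocoercive cross term: for $|\alpha|\le 1$ examine $-\ddt\int \p^\alpha u\cdot\p_2\p^\alpha b$, which by the linearized relations $u_t=\Dd u+\p_2 b+\cdots$, $b_t=\p_2 u+\cdots$ produces $+\|\p_2\p^\alpha b\|_{L^2}^2$ on the left, modulo $\|\p_2 u\|^2$ and $\int\Dd\p^\alpha u\cdot\p_2\p^\alpha b$, both of which are absorbed into the viscous dissipation $\|\na u\|_{H^2}^2$ after Young's inequality. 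Adding a small multiple of this functional to the energy yields a Lyapunov functional equivalent to $E$ whose dissipation controls the full $D$.

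The heart of the matter is the nonlinear remainder, and the step I expect to be the \emph{main obstacle}. Most terms in $\mathcal N$ and in the cross-term remainder are cubic and close by $\sqrt E\,D$ using 2D anisotropic product inequalities of the type $\|fg\|_{L^2}\lesssim\|f\|_{L^2}^{1/2}\|\p_1 f\|_{L^2}^{1/2}\|g\|_{L^2}^{1/2}\|\p_2 g\|_{L^2}^{1/2}$, which let me pair each factor with the derivative that is actually dissipated (full $\na u$, or $\p_2 b$), using incompressibility $\p_2 u_2=-\p_1 u_1$ to route derivatives onto dissipated directions. The single obstruction is the wildest term
\[
\int_0^t\!\!\int_{\R^2}\p_2 u_2\,|\p_1^2 b_2|^2\,dx\,d\tau,
\]
where $\p_2 u_2$ carries neither smallness nor integrable decay, since $\|\na u\|_{L^1_tL^\infty_x}$ is uncontrolled. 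Following the paper's strategy of treating this as an artificial linear term, I would use the second equation componentwise, $\p_2 u_2=\p_t b_2+u\cdot\na b_2-b\cdot\na u_2$, to replace the dangerous factor: the genuinely nonlinear pieces $u\cdot\na b_2$ and $b\cdot\na u_2$ become quartic and close as above, while the leading piece $\int\p_t b_2\,|\p_1^2 b_2|^2$ is reorganized through an auxiliary functional. Writing it as $\ddt\int b_2|\p_1^2 b_2|^2-2\int b_2\,\p_1^2 b_2\,\p_1^2\p_t b_2$ and invoking the equation once more, $\p_1^2\p_t b_2=\p_1^2\p_2 u_2+(\text{nonlinear})$, converts the remainder into a term carrying the \emph{good} derivative $\p_1^2\p_2 u_2$, controlled by $\|\na u\|_{H^2}$, while the total-derivative part integrates to boundary contributions bounded by $\sup E^{3/2}$. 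This is the mechanism that improves the nonlinearity order and trades the undissipated $\p_2 u_2$ for the viscous dissipation.

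Finally I would collect everything into an inequality of the form
\[
E(t)+c\!\int_0^t D\le C\epsilon^2+C\big(\sup_{[0,t]}\sqrt E\big)\Big(\int_0^t D+\text{lower order terms}\Big),
\]
so that under the bootstrap assumption $\sqrt E\le\sqrt\delta$ with $\delta$ (hence $\epsilon$) small, the last term is absorbed and the bound $\sup E+\int_0^t D\le 2C\epsilon^2<\delta$ closes. Together with local existence this gives the global solution, and uniqueness follows from an energy estimate on the difference of two solutions in a lower-regularity norm. The delicate points to watch are ensuring that every nonlinear term genuinely produces at least one dissipated factor (full $\na u$ or $\p_2 b$) and keeping the regularity budget consistent, since the substitution in the wild term spends two extra $x_1$-derivatives that must be paid for by the $H^2$ ceiling.
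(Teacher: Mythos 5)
Your proposal reproduces the paper's \emph{first} key idea --- substituting $\p_2 u_2=\p_t b_2+u\cdot\na b_2-b\cdot\na u_2$ inside the wildest term and reorganizing the $\p_t b_2$ piece as a time derivative plus terms carrying the good factor $\p_1^2\p_2 u_2$ --- but it breaks down exactly where the paper's \emph{second}, central idea is required. After the substitution, the piece $-\int b_1\,\p_1 u_2\,|\p_1^2 b_2|^2\,dx$ does \emph{not} ``become quartic and close as above'': pairing $\p_1 u_2$ with the viscous dissipation (e.g.\ by Cauchy--Schwarz) leaves the quantity
\begin{equation}\nonumber
\int_0^t\!\!\int_{\R^2} |b_1|^2\,|\p_1^2 b_2|^2\,dx\,d\tau,
\end{equation}
in which \emph{no} factor carries a dissipated derivative: $b_1$ is undissipated (only $\p_2 b$ enters $E_1$), and $\p_1^2 b_2$ is undissipated as well. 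Your own closing criterion --- every term must produce at least one dissipated factor --- fails precisely here. The best the 2D anisotropic inequalities give is $\|b_1\|_{L^\infty}^2\leq C\|b\|_{H^2}\,\|\p_2 b\|_{H^1}$, so this term is bounded only by $\sup_{\tau}\|b\|_{H^2}^3\int_0^t\|\p_2 b\|_{H^1}\,d\tau\leq C\sup_{\tau}\|b\|_{H^2}^3\, t^{1/2}E_1^{1/2}$, which grows in time; equivalently, $\|b_1\|_{L^2_tL^\infty_x}$ is not controlled by the energy, which is exactly the obstruction the paper isolates (and the reason earlier works needed negative-index Sobolev data). The same quartic structure reappears in the dissipation-producing cross estimate for $E_1$ (the term $\langle b_1\p_1^2 b_2,\p_1\p_2 b_2\rangle$, i.e.\ $Q_6$ in the paper), so the gap is not confined to one spot and the bootstrap inequality you state at the end cannot be derived this way.

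The missing mechanism is the paper's partial Els\"asser construction: set $\Omega_{\pm}=u_1\pm b_1$, promote $A_{\pm}(t)=\int_0^t\int_{\R^2}|\Omega_{\pm}|^2|\p_1^2 b_2|^2\,dx\,d\tau$ to a component of the bootstrap energy itself (so the crucial term is bounded by $A_++A_-$ rather than estimated by products), and control $A_{\pm}$ not by product estimates but through the corrected quantities $G_{\pm}=|\Omega_{\pm}|^2\mp\p_t\int_{-\infty}^{x_2}|\Omega_{\pm}|^2\,dy$ of \eqref{G}: the equation \eqref{omega} for $\Omega_{\pm}$, multiplied by $2\Omega_{\pm}$ and integrated in $x_2$, yields the identity \eqref{G2}, in which every term on the right ($u\cdot\na\Omega_{\pm}$, $b\cdot\na\Omega_{\pm}$, $\Delta u_1$, $\p_1 P$) does carry a dissipated or time-integrable factor; this is the content of Propositions \ref{prop0} and \ref{prop1} and Lemma \ref{lem0}, which produce a self-consistent inequality $A_{\pm}\leq C(E_0+E_0^{3/2})(E_0+E_1)+CE_0^{1/2}A_{\pm}+CE_0A_{\pm}^{1/2}+\cdots$ that closes together with the $E_0$ and $E_1$ estimates for the total energy $\mathscr{E}=E_0+E_1+A_++A_-$. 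Without introducing $A_{\pm}$ and the $G_{\pm}$ identity (or an equivalent device exploiting that $\Omega_{\pm}$ satisfies a transport equation with source $\p_2\Omega_{\pm}$ up to controllable errors), the quartic term above is simply left unestimated, and your argument does not close.
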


The proof of Theorem 1.1 is highly non-trivial, or in some sense unexpected. We will briefly explain the main ideas and the pathway of the proof.  As explained earlier, one aims at  bounding $\|u(t)\|_{H^2} + \|b(t)\|_{H^2}$ through the energy estimate, with energy functional,
\begin{equation}\nonumber
\sup_{0 \leq \tau \leq t} (\|u(\tau)\|_{H^2(\mathbb{R}^2)}^2 + \|b(\tau)\|_{H^2(\mathbb{R}^2)}^2) + \int_0^t \Big( \|\nabla u(\tau)\|_{H^2({\mathbb{R}^2})}^2 + \|\p_2 b(\tau)\|_{H^1({\mathbb{R}^2})}^2 \Big)\; d\tau,
\end{equation}
and the major obstacle is to control

$$\int_0^t \int_{\mathbb{R}^2} \p_2 u_2 |\p_1^2 b_2|^2(x,\tau)\; dx \;d\tau,$$
without the help of magnetic diffusion.

To overcome this problem, our first idea is to use the magnetic field equation in \eqref{mhd0} to replace $\p_2 u_2$ by $\p_t b_2 + u \cdot \nabla b_2 - b \cdot \nabla u_2$. This method will transfer the triple nonlinear term to fourth order nonlinear terms and some other good terms that we could control (for more details, please see Proposition \ref{prop1}). Roughly speaking, we have
\begin{equation}\nonumber
\begin{split}
&\int_0^t \int_{\mathbb{R}^2} \p_2 u_2 |\p_1^2 b_2|^2\; dx \;d\tau\\
 =& -\int_0^t \int_{\mathbb{R}^2} b_1 \p_1 u_2 |\p_1^2 b_2|^2\; dx \;d\tau + \text{some good terms} \\
\leq& \Big(\int_0^t \int_{\mathbb{R}^2} |\p_1 u_2|^2 |\p_1^2 b_2|^2 \; dx \; d\tau\Big)^\frac{1}{2}\Big( \int_0^t \int_{\mathbb{R}^2} |b_1|^2 |\p_1^2 b_2|^2 \; dx \; d\tau\Big)^\frac{1}{2} \\
& \qquad \qquad +  \text{some good terms}.
\end{split}
\end{equation}
Then, there comes the most crucial term
\begin{equation}\label{crucial}
\int_0^t \int_{\mathbb{R}^2} |b_1|^2 |\p_1^2 b_2|^2\; dx \;d\tau.
\end{equation}

Noticing the setting of initial data in Theorem \ref{thm}, without any extra help from Sobolev space with negative exponents, it seems hard to reach the control (uniformly in time) for $\|b_1\|_{L_t^2L_x^\infty}$, although it looks better than $\|\nabla u\|_{L^1_t L_x^\infty}$.  More precisely, the fact $ \|b_1\|_{L^\infty} \nleqslant C \|b_1\|_{\dot{H}^1} = C \|\p_2 b\|_{L^2}$  in $\mathbb{R}^2$ implies the fact
$$\int_0^t \int_{\mathbb{R}^2} |b_1|^2 |\p_1^2 b_2|^2 \; dx \; d\tau \nleqslant C \sup_{0 \leq \tau \leq t} \|b\|_{H^2(\mathbb{R}^2)}^2 \int_0^t \|\p_2 b\|_{L^2(\mathbb{R}^2)}^2 \; d\tau .$$
Therefore, one will need to find some other help from the structure of our system.

Our second key idea is to find a ``good" way to explore the dispersive effects of Alfv\'{e}n waves in \eqref{mhd0}. In the seminal work \cite{BSS}, Bardos, Sulem and Sulem used the celebrated Els{\" a}sser variables
 $$Z^{\pm}=u\pm b$$
 to trace down the dispersive effects of Alfv\'{e}n waves, a stabilization effects offered by nontrivial background magnetic field, to establish the global existence of classical solutions to ideal MHD, without viscosity or magnetic diffusion. When both viscosity and magnetic diffusion are in effect and their coefficients are almost equal to each other, \cite{CaiLei, HeXuYu, WeiZ} proved the efficiency of
 Els{\" a}sser variables in this case and established global existence of classical solutions, and the connection to the solutions of ideal MHD in vanishing dissipation limit. However, it seems nearly impossible for Els{\" a}sser variables to be effective in our case, since the ratio between viscosity coefficient and magnetic diffusion coefficient is infinity in \eqref{mhd0}. However, we are not aiming at
 all directions but in $x_1$-direction where dissipation is inactive in principle. This motivated us to try part of Els{\" a}sser variables.

To this end, our strategy is NOT to treat detached quantity in \eqref{crucial} as individual part. Which means we will treat it as a whole and find some "good" equivalent forms for it. We define
$$\Omega_{\pm} \triangleq  u_1 \pm b_1, $$
\begin{equation}\label{G}
G_{\pm} \triangleq |\Omega_{\pm}|^2 \mp \p_t \int_{-\infty}^{x_2} |\Omega_{\pm}|^2 \; dy.
\end{equation}
Such strategy takes advantage of the idea that regards the nonlinear terms as an artificial linear term, and study its evolution.
We remark that, the variable $\Omega_{\pm}$ are actually the ``good" variables, but it causes errors due to the mismatch between the viscosity coefficient and magnetic diffusion coefficient in \eqref{mhd0}. Therefore, one would need to construct the correction parts to cancel out majority of the error. This leads to the introduction of quantities  $G_{\pm}$.

Now, we are able to find out that
\begin{equation}\nonumber
\begin{split}
& \int_0^t \int_{\mathbb{R}^2} |b_1|^2 |\p_1^2 b_2|^2\; dx \;d\tau \\
= & \int_0^t \int_{\mathbb{R}^2} \big|\frac{\Omega_+ - \Omega - }{2}\big|^2 |\p_1^2 b_2|^2\; dx \;d\tau \\
\leq & \int_0^t \int_{\mathbb{R}^2} (\Omega_{+}^2 + \Omega_{-}^2) |\p_1^2 b_2|^2\; dx \;d\tau \\
\leq & \; C\int_0^t \int_{\mathbb{R}^2} (G_{+}  + G_{-})|\p_1^2 b_2|^2\; dx \;d\tau + \text{some good terms},
\end{split}
\end{equation}
 where, $\int_0^t \int_{\mathbb{R}^2} G_{\pm} |\p_1^2 b_2|^2\; dx \;d\tau$ would be estimated through  the following identity
\begin{equation}\label{G2}
  G_{\pm}  = 2 \int_{-\infty}^{x_2} (\pm u \cdot \nabla \Omega_{\pm} \mp b \cdot \nabla \Omega_{\pm} \mp \Delta u_1 \pm \p_1 P) \Omega_{\pm} \; dy.
\end{equation}
For more details, we refer to Proposition \ref{prop0}.

From the discussion above, it is then clear that one needs to define the following energy framework:
\begin{equation}\label{energy}
  \begin{split}
    E_0(t) =& \sup_{0 \leq \tau \leq t} (\|u(\cdot,\tau)\|_{H^2(\mathbb{R}^2)}^2 + \|b(\cdot,\tau)\|_{H^2(\mathbb{R}^2)}^2) + \int_0^t \|\nabla u(\cdot,\tau)\|_{H^2(\mathbb{R}^2)}^2 \; d\tau, \\
    E_1(t) =& \int_0^t \|\p_2 b(\cdot,\tau)\|_{H^1(\mathbb{R}^2)}^2 \; d\tau, \\
    A_{\pm}(t) =& \int_0^t \int_{\mathbb{R}^2} |\Omega_{\pm}|^2 |\p_1^2 b_2|^2(x,\tau)\; dx \; d\tau,
  \end{split}
\end{equation}
and the total energy $ \mathscr{E}(t)$ is defined as follows,
$$\mathscr{E}(t) = E_0(t) + E_1(t) + A_-(t) + A_+(t) . $$

In summary, we not only proved the global existence of classical solutions to \eqref{mhd0} without using any additional condition on the initial data from Sobolev space with negative components, but also discovered an effective approach to take advantage of dispersion of Alfv\'{e}n waves in the case when viscosity and magnetic diffusion are not well-matched. The arrange of the rest of paper are follows. In Section 2, we explicit some preliminaries on the estimates of $\Omega_{\pm}$ and $G_{\pm}$, and how they will help with the estimate of  the crucial term \eqref{crucial}. We will then establish the desired energy estimates in Section 3, and complete the proof of Theorem 1.1 in Section 4.

\vskip .3in
\section{Preliminaries}\label{prelim}
In order to capture the inherent structure of the system, we have introduced suitable functions to obtain the directional decay of solutions previously. In this section, we present two propositions to derive the energy estimate for two crucial terms:
$$ \int_0^t \int_{\mathbb{R}^2}  G_{\pm} |\p_1^2 b_2|^2 \; dx \; d\tau \quad and \quad \int_0^t \int_{\mathbb{R}^2} \p_2 u_2 |\p_1^2 b_2|^2 \; dx \; d\tau,$$
which is critical in the estimate for $A_{\pm}(t)$ we defined in \eqref{energy}. Moveover, $A_{\pm}(t)$ is the core portion throughout our paper and its estimate plays key role in our whole proof.

\begin{proposition}\label{prop0}
For $G_{\pm}$ defined in \eqref{G}, \eqref{G2} and the energy framework in \eqref{energy},  the following estimates hold
\begin{equation}\nonumber
  \begin{split}
   \int_0^t \int_{\mathbb{R}^2}  G_{\pm} |\p_1^2 b_2|^2 \; dx \; d\tau = & \int_0^t \int_{\mathbb{R}^2}  \big(|\Omega_{\pm}|^2 \mp \p_t \int_{-\infty}^{x_2} |\Omega_{\pm}|^2 \; dy\big) |\p_1^2 b_2|^2 \; dx \; d\tau \\
    \leq &  \; C (E_0 + E_0^\frac{3}{2}) (E_0 + E_1) + CE_0^\frac{1}{2} A_{\pm} + C E_0 A_{\pm}^\frac{1}{2}.
  \end{split}
\end{equation}
\end{proposition}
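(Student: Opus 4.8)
The plan is to substitute the pointwise identity \eqref{G2} for $G_{\pm}$ and to estimate the four families of terms it produces, those coming from the transport parts $\pm u\cdot\nabla\Omega_{\pm}$, $\mp b\cdot\nabla\Omega_{\pm}$ and from the forcing parts $\mp\Delta u_1$, $\pm\partial_1 P$. The reason \eqref{G2} is the right form is that the leading $|\Omega_{\pm}|^2$ of \eqref{G} has already cancelled against the corresponding piece of $\mp\partial_t\int_{-\infty}^{x_2}|\Omega_{\pm}|^2\,dy$, so that $G_{\pm}=2\int_{-\infty}^{x_2}S_{\pm}\Omega_{\pm}\,dy$ with $S_{\pm}=\pm u\cdot\nabla\Omega_{\pm}\mp b\cdot\nabla\Omega_{\pm}\mp\Delta u_1\pm\partial_1 P$; no bare copy of $A_{\pm}$ survives, which is exactly why only $E_0^{1/2}A_{\pm}$ and $E_0A_{\pm}^{1/2}$ (both absorbable after Young) are allowed to appear. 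A preliminary observation I would record first is that, by incompressibility, $\partial_1\Omega_{\pm}=\partial_1 u_1\mp\partial_2 b_2$ and $\partial_2\Omega_{\pm}=\partial_2 u_1\pm\partial_2 b_1$; hence every first derivative of $\Omega_{\pm}$ splits into a viscous quantity (controlled in $L^2_t$ by $E_0$) and a favorable-direction quantity (controlled in $L^2_t$ by $E_1$). This is what will ultimately supply time integrability to the transport contributions.

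The heart of the argument is the treatment of the nonlocal vertical integral against the top-order factor $|\partial_1^2 b_2|^2$. The guiding principle is that $\partial_1^2 b_2$ has no horizontal dissipation and may therefore be used only in $L^\infty_tL^2_x$ (bounded by $E_0$) or inside the coupled quantity $A_{\pm}$, and must never be differentiated in $x_1$. Accordingly I would integrate by parts in the outer variable $x_2$, rewriting $\int_0^t\int G_{\pm}|\partial_1^2 b_2|^2=2\int_0^t\int S_{\pm}\Omega_{\pm}\,\Theta\,dx\,d\tau$ with the tail $\Theta(x_1,x_2)=\int_{x_2}^{\infty}|\partial_1^2 b_2|^2\,ds$, which localizes $S_{\pm}\Omega_{\pm}$ at the same height and satisfies $0\le\Theta\le\|\partial_1^2 b_2(x_1,\cdot)\|_{L^2_{x_2}}^2$. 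For the genuinely vertical pieces of the transport, $(u_2,b_2)\,\partial_2\Omega_{\pm}$, I would instead use $\Omega_{\pm}\partial_2\Omega_{\pm}=\tfrac12\partial_2|\Omega_{\pm}|^2$ and integrate by parts in $y$: the boundary terms $\tfrac12(u_2,b_2)|\Omega_{\pm}|^2$ are local, and once multiplied by $|\partial_1^2 b_2|^2$ and majorized using $\|(u_2,b_2)\|_{L^\infty}\le CE_0^{1/2}$ they produce precisely the $CE_0^{1/2}A_{\pm}$ contribution.

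For the remaining contributions — the horizontal transport $(u_1,b_1)\,\partial_1\Omega_{\pm}$, the leftover vertical integrals from the integrations by parts, and the forcing $\mp\Delta u_1\pm\partial_1 P$ — I would combine the localized representation with Cauchy--Schwarz in $(x,\tau)$, splitting each term either so as to expose a factor $A_{\pm}^{1/2}=(\int_0^t\int|\Omega_{\pm}|^2|\partial_1^2 b_2|^2)^{1/2}$ (giving the $CE_0A_{\pm}^{1/2}$ contribution once the conjugate factor is controlled by $E_0$) or so as to fall entirely on the dissipative norms (giving $C(E_0+E_0^{3/2})(E_0+E_1)$). Here the incompressibility identities for $\nabla\Omega_{\pm}$ together with the anisotropic inequalities $\|f\|_{L^\infty_{x_1}L^2_{x_2}}^2\le C\|f\|_{L^2}\|\partial_1 f\|_{L^2}$ and $\|f\|_{L^\infty_{x_2}L^2_{x_1}}^2\le C\|f\|_{L^2}\|\partial_2 f\|_{L^2}$ are used to turn the one-dimensional line integrals coming from $\int_{-\infty}^{x_2}(\cdot)\,dy$ into full $L^2_x$ norms, always spending the surplus derivative on $u$ (via $E_0$) or on a $\partial_2$-derivative of $b$ (via $E_1$), never as $\partial_1$ on $\partial_1^2 b_2$. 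The pressure is removed through the elliptic relation $\Delta P=-\partial_i\partial_j(u_iu_j-b_ib_j)$ obtained from the divergence of the momentum equation, which yields $\|\partial_1 P\|_{L^2}\le C(\|u\|_{L^4}^2+\|b\|_{L^4}^2)\le CE_0$ and analogous bounds for its derivatives, placing the forcing on the same footing as the quadratic transport terms. A concluding application of Young's inequality, $E_0A_{\pm}^{1/2}\le\delta A_{\pm}+C_\delta E_0^2$, then puts the estimate in the stated form.

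The main obstacle, and the step I expect to require the most care, is exactly the interplay between the nonlocal vertical average and the absence of horizontal dissipation for $b$: each individual term carries only a single favorable ($L^2_t$) dissipative factor, which on its own is not integrable over $[0,\infty)$, so the missing integrability must be supplied by the coupled quantity $A_{\pm}$. The two delicate requirements are (i) to localize $\Omega_{\pm}$ at the same height as $\partial_1^2 b_2$, through the $x_2$- and $y$-integrations by parts above, so that a pairing into $A_{\pm}^{1/2}$ is legitimate, and (ii) to verify that the conjugate factor never forces a norm of $\partial_1^2 b_2$ stronger than $L^2_x$ — in particular no $\partial_1^3 b_2$ or $\partial_1^2\partial_2 b_2$, which are uncontrolled by $E_0,E_1$ — but instead always throws the extra derivative onto $u$ or onto a $\partial_2$-derivative of $b$. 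Keeping these two constraints simultaneously satisfied, while guaranteeing that every occurrence of $A_{\pm}$ carries a prefactor that is a positive power of the small quantity $E_0$, is the crux of the proposition.
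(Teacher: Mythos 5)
Your skeleton coincides with the paper's: substitute \eqref{G2}, split into transport and forcing parts, integrate by parts in $y$ for the vertical transport so that the boundary term $(u_2\mp b_2)|\Omega_{\pm}|^2|\partial_1^2b_2|^2$ yields $CE_0^{1/2}A_{\pm}$, and close the remaining transport pieces with anisotropic inequalities, using that $\partial_1\Omega_{\pm}=\partial_1u_1\mp\partial_2b_2$ and $\partial_2\Omega_{\pm}=\partial_2u_1\pm\partial_2b_1$ are dissipative. All of that matches the paper's estimate of $I_1^{\pm}$. The genuine gap lies in the two forcing terms $\mp\Delta u_1$ and $\pm\partial_1P$, which is exactly where the paper does its most structure-dependent work. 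For $\Delta u_1$ you propose Cauchy--Schwarz on the localized (Fubini) representation, but there $\Omega_{\pm}$ sits at height $y$ while the $\partial_1^2b_2$ inside $\Theta$ sits at heights $s\ge y$, so no factor $A_{\pm}^{1/2}$ can be exposed; and the alternative of landing entirely on dissipative norms fails because the conjugate factor is the \emph{undifferentiated} $\Omega_{\pm}$: its best anisotropic bound $\|\Omega_{\pm}\|_{L^\infty_{x_1}L^2_{x_2}}\le C\|\Omega_{\pm}\|_{L^2}^{1/2}\|\partial_1\Omega_{\pm}\|_{L^2}^{1/2}$ is only $L^4$ in time, so $\int_0^t\|\Delta u_1\|_{L^\infty_{x_1}L^2_{x_2}}\|\Omega_{\pm}\|_{L^\infty_{x_1}L^2_{x_2}}\,d\tau$ grows like $t^{1/4}$ and is not controlled by $E_0$, $E_1$, $A_{\pm}$. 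The missing idea (the paper's treatment of $I_2^{\pm}$, see \eqref{I2}) is to use incompressibility to write $\Delta u_1=\partial_2(\partial_2u_1-\partial_1u_2)$ as a perfect vertical derivative and integrate by parts in $y$ here as well: the resulting boundary term $(\partial_2u_1-\partial_1u_2)\Omega_{\pm}|\partial_1^2b_2|^2$ is local in height, and Cauchy--Schwarz converts it into exactly the $CE_0A_{\pm}^{1/2}$ of the statement (in fact the only source of that term), while in the bulk term the derivative falls on $\Omega_{\pm}$, and $\partial_2\Omega_{\pm}$ is fully dissipative, hence in $L^2_t$.

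The pressure step as you state it is also incorrect: $\partial_1P=(-\Delta)^{-1}\partial_1\partial_i\partial_j(u_iu_j-b_ib_j)$ carries one net derivative, so the claimed $\|\partial_1P\|_{L^2}\le C(\|u\|_{L^4}^2+\|b\|_{L^4}^2)$ is false (it is not even scaling-consistent); more importantly, any bound of size $CE_0$ that is merely uniform in time is useless here, since the pressure contribution would then grow linearly in $t$. What is required --- and what the paper's $I_3^{\pm}$ estimate \eqref{I3} arranges --- is that \emph{every} pressure contribution carry a time-integrable factor $\|\nabla u\|_{H^2}$ or $\|\partial_2b\|_{H^1}$: let the zeroth-order Riesz operators absorb two of the three derivatives so that exactly one derivative lands on the quadratic expressions, choose that derivative to be $\partial_2$ whenever an index equals $2$, and for the remaining $(1,1)$ component write $\partial_1(u_1^2-b_1^2)=2(u_1\partial_1u_1-b_1\partial_1b_1)$ and convert $\partial_1b_1=-\partial_2b_2$ by incompressibility. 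Your guiding principles (never differentiate $\partial_1^2b_2$ in $x_1$; spend surplus derivatives on $u$ or on $\partial_2b$) are the right ones, and your transport estimates are sound, but as written the proposal does not close for either forcing term, and these are precisely the terms that produce the $CE_0A_{\pm}^{1/2}$ piece and part of the $C(E_0+E_0^{3/2})(E_0+E_1)$ piece of the stated bound.
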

\begin{proof}
Using MHD equations \eqref{mhd0}, direct computation shows 
\begin{equation}\label{omega}
  \p_2 \Omega_{\pm} \mp \p_t \Omega_{\pm} = \pm u \cdot \nabla \Omega_{\pm} - b \cdot \nabla \Omega_{\pm} \mp \Delta u_1 \pm \p_1 P.
\end{equation}
Multiply \eqref{omega} by $2 \Omega_{\pm}$ and integrate it over $(-\infty, x_2]$, one has
\begin{equation}\label{omega1}
|\Omega_{\pm}|^2 \mp \p_t \int_{-\infty}^{x_2} |\Omega_{\pm}|^2 \; dy = 2 \int_{\infty}^{x_2} \Big(\pm u \cdot \nabla \Omega_{\pm} - b \cdot \nabla \Omega_{\pm} \mp \Delta u_1 \pm \p_1 P\Big)  \Omega_{\pm}\; dy.
\end{equation}
Then, multiply \eqref{omega1} by $|\p_1^2 b_2|^2$ and integrate it over $\mathbb{R}^2 \times [0, t]$, it holds that
\begin{equation}\nonumber
  \begin{split}
    \int_0^t \int_{\mathbb{R}^2} \big( |\Omega_{\pm}|^2 \mp \p_t \int_{-\infty}^{x_2} |\Omega_{\pm}|^2 \; dy \big)|\p_1^2 b_2|^2 \; dx \; d\tau = \sum_{i = 1}^3 I_i^{\pm},
  \end{split}
\end{equation}
where,
\begin{equation}\nonumber
  \begin{split}
    I_1^{\pm} =& \pm 2 \int_0^t \int_{\mathbb{R}^2} \Big( \int_{-\infty}^{x_2}( u \cdot \nabla \Omega_{\pm} \mp b \cdot \nabla \Omega_{\pm}) \Omega_{\pm} \; dy \Big) |\p_1^2 b_2|^2\; dx \; d\tau, \\
    I_2^{\pm} =& \mp 2 \int_0^t \int_{\mathbb{R}^2} \Big( \int_{-\infty}^{x_2} \Delta u_1  \Omega_{\pm} \; dy  \Big)|\p_1^2 b_2|^2 \; dx \; d\tau, \\
    I_3^{\pm} =& \pm 2 \int_0^t \int_{\mathbb{R}^2} \Big( \int_{-\infty}^{x_2} \p_1 P \Omega_{\pm} \; dy \Big) |\p_1^2 b_2|^2 \; dx \; d\tau.
  \end{split}
\end{equation}
By using integration by parts, we can divide the first term $I_1^{\pm}$ into three parts
\begin{equation}\nonumber
  \begin{split}
    I_1^{\pm} =& \pm 2\int_0^t \int_{\mathbb{R}^2} \Big( \int_{-\infty}^{x_2} (u_1 \mp b_1) \p_1 \Omega_{\pm} \Omega_{\pm}  \; dy \Big) |\p_1^2 b_2|^2 \; dx \; d\tau \\
    & \mp \int_0^t \int_{\mathbb{R}^2}  (u_2 \mp b_2) |\Omega_{\pm}|^2 |\p_1^2 b_2|^2 \; dx \; d\tau  \\
    & \pm \int_0^t \int_{\mathbb{R}^2} \Big(  \int_{-\infty}^{x_2} \p_2 (u_2 \mp b_2) |\Omega_{\pm}|^2 \; dy \Big) |\p_1^2 b_2|^2 \; dx \; d\tau .
\end{split}
\end{equation}
H\"{o}lder's inequality then implies
\begin{equation}\nonumber
\begin{split}
 I_1^{\pm}   \leq & \; C \sup_{0 \leq \tau \leq t} \|\p_1^2 b_2\|_{L^2}^2 \int_0^t \Big \|\int_{-\infty}^{x_2} (u_1 \mp b_1) \p_1 \Omega_{\pm} \Omega_{\pm}  \; dy\Big \|_{L^\infty} \; d\tau \\
    & + C \sup_{0 \leq \tau \leq t} \|(u_2 \mp b_2)\|_{L^\infty} \int_0^t \int_{\mathbb{R}^2} |\Omega_{\pm}|^2 |\p_1^2 b_2|^2 \; dx \; d\tau \\
    & + C \sup_{0 \leq \tau \leq t} \|\p_1^2 b_2\|_{L^2}^2 \int_0^t \Big \| \int_{-\infty}^{x_2} \p_2 (u_2 \mp b_2) |\Omega_{\pm}|^2 \; dy \Big \|_{L^\infty} \; d\tau.
  \end{split}
\end{equation}
Apply the anisotropic type inequalities to the nonlinear terms, the first line above can be controlled by 
$$ C \sup_{0 \leq \tau \leq t}\|\p_1^2 b_2\|_{L^2}^2 \int_0^t \|(u_1 \mp b_1)\|_{L^\infty} \|\p_1 \Omega_{\pm}\|_{L_{x_1}^\infty L_{x_2}^2} \|\Omega_{\pm}\|_{L_{x_1}^\infty L_{x_2}^2} \; d\tau,$$
and the third line in above inequality can be bounded by
$$ C \sup_{0 \leq \tau \leq t} \|\p_1^2 b_2\|_{L^2}^2 \int_0^t \|\p_2 (u_2 \mp b_2)\|_{L_{x_1}^\infty L_{x_2}^2} \|\Omega_{\pm}\|_{L_{x_1}^\infty L_{x_2}^2} \|\Omega_{\pm}\|_{L^\infty} \; d\tau.$$
Consequently, $I_1^{\pm}(t)$ can be controlled by the following terms
\begin{equation}\nonumber
\begin{split}
   & C \sup_{0 \leq \tau \leq t}\|\p_1^2 b_2\|_{L^2}^2 \int_0^t \|(u_1 \mp b_1)\|_{L^2}^\frac{1}{2} \|\nabla^2 (u_1 \mp b_1)\|_{L^2}^\frac{1}{2} \\
   &\qquad \qquad\qquad \|\nabla \p_1 \Omega_{\pm}\|_{L^2}^\frac{1}{2} \|\Omega_{\pm}\|_{L^2}^\frac{1}{2} \|\p_1 \Omega_{\pm}\|_{L^2}  \; d\tau \\
   + \;&C \sup_{0 \leq \tau \leq t} \|(u_2 \mp b_2)\|_{H^2} \int_0^t \int_{\mathbb{R}^2} |\Omega_{\pm}|^2 |\p_1^2 b_2|^2 \; dx \; d\tau \\
   + \;&C \sup_{0 \leq \tau \leq t} \|\p_1^2 b_2\|_{L^2}^2 \int_0^t  \Big\{ \|\p_2 (u_2 \mp b_2)\|_{L^2}^\frac{1}{2} \|\p_1 \p_2 (u_2 \mp b_2)\|_{L^2}^\frac{1}{2} \\
    &  \qquad \qquad  \cdot  \|\Omega_{\pm}\|_{L^2}^\frac{1}{2}\|\p_1 \Omega_{\pm}\|_{L^2}^\frac{1}{2} \|\Omega_{\pm}\|_{L^2}^\frac{1}{2} \|\nabla^2 \Omega_{\pm}\|_{L^2}^\frac{1}{2} \Big\}\;  d\tau .
\end{split}
\end{equation}
Moreover, we can write
\begin{equation}\nonumber
\begin{split}
I_1^{\pm}(t) \leq  & \;C \sup_{0 \leq \tau \leq t}\|(u_1 \mp b_1)\|_{L^2}^\frac{1}{2}\|\Omega_{\pm}\|_{L^2}^\frac{1}{2} \|\p_1^2 b_2\|_{L^2}^2 \int_0^t \|\nabla^2 (u_1 \mp b_1)\|_{L^2}^\frac{1}{2} \\
& \qquad\qquad \|\nabla \p_1 \Omega_{\pm}\|_{L^2}^\frac{1}{2}\|\p_1 \Omega_{\pm}\|_{L^2} \; d\tau \\
   + & \;C\sup_{0 \leq \tau \leq t} \|(u_2 \mp b_2)\|_{H^2} \int_0^t \int_{\mathbb{R}^2} |\Omega_{\pm}|^2 |\p_1^2 b_2|^2 \; dx \; d\tau \\
   + & \;C \sup_{0 \leq \tau \leq t} \|\Omega_{\pm}\|_{L^2}\|\p_1^2 b_2\|_{L^2}^2 \int_0^t \|\p_2 (u_2 \mp b_2)\|_{L^2}^\frac{1}{2} \|\p_1 \p_2 (u_2 \mp b_2)\|_{L^2}^\frac{1}{2} \\
    &  \qquad \qquad\qquad  \cdot \|\p_1 \Omega_{\pm}\|_{L^2}^\frac{1}{2} \|\nabla^2 \Omega_{\pm}\|_{L^2}^\frac{1}{2} \; d\tau .
  \end{split}
\end{equation}
We then arrive at the  following estimate for $I_1^{\pm}$
\begin{equation}\label{I1}
 I_1^{\pm}  \leq  \;  CE_0^\frac{3}{2}(E_0 + E_1) + CE_0^\frac{1}{2} A_{\pm}.
\end{equation}
For the next term $I_2^{\pm}$, using the divergence free condition $\nabla \cdot u = 0$, we can split it into three parts.
\begin{equation}\nonumber
  \begin{split}
    I_2^{\pm} (t) =&  \mp 2 \int_0^t \int_{\mathbb{R}^2} \Big( \int_{-\infty}^{x_2} \p_2(- \p_1 u_2 + \p_2 u_1)  \Omega_{\pm} \; dy \Big)|\p_1^2 b_2|^2 \; dx \; d\tau, \\
     = & \mp 2 \int_0^t \int_{\mathbb{R}^2}(- \p_1 u_2 + \p_2 u_1)  \Omega_{\pm}  |\p_1^2 b_2|^2 \; dx \; d\tau \\
     & \pm 2 \int_0^t \int_{\mathbb{R}^2} \Big( \int_{-\infty}^{x_2} (- \p_1 u_2 + \p_2 u_1)  \p_2 \Omega_{\pm} \; dy \Big) |\p_1^2 b_2|^2 \; dx \; d\tau. \\
\end{split}
\end{equation}
Using H\"{o}lder's inequality and some anisotropic type inequalities, it's obviously that
\begin{equation}\label{I2}
\begin{split}
I_2^{\pm} \leq & \; C \Big(\int_0^t \int_{\mathbb{R}^2} |\nabla u|^2 |\p_1^2 b_2|^2 \; dx \; d\tau\Big)^\frac{1}{2} \Big(\int_0^t \int_{\mathbb{R}^2} |\Omega_{\pm}|^2 |\p_1^2 b_2|^2 \; dx \; d\tau\Big)^\frac{1}{2} \\
     & + C \sup_{0 \leq \tau \leq t} \|\p_1^2 b_2\|_{L^2}^2 \int_0^t \|\nabla u\|_{L_{x_1}^\infty L_{x_2}^2}   \|\p_2 \Omega_{\pm} \|_{L_{x_1}^\infty L_{x_2}^2}  \; d\tau \\
\leq & \; C \Big(\int_0^t \int_{\mathbb{R}^2} |\nabla u|^2 |\p_1^2 b_2|^2 \; dx \; d\tau\Big)^\frac{1}{2} \Big(\int_0^t \int_{\mathbb{R}^2} |\Omega_{\pm}|^2 |\p_1^2 b_2|^2 \; dx \; d\tau\Big)^\frac{1}{2} \\
     & + C \sup_{0 \leq \tau \leq t} \|\p_1^2 b_2\|_{L^2}^2 \int_0^t \|\nabla u\|_{H^2}   \|\p_2 b \|_{H^1}  \; d\tau \\
     \leq & \; C E_0 A_{\pm}^\frac{1}{2} + E_0(E_0 + E_1).
  \end{split}
\end{equation}
Now, let's focus on the last term $I_3^{\pm}$, notice that simple calculation implies the equality
$ P = (-\Delta)^{-1} \p_i \p_j(u_i u_j - b_i b_j)$, we can rewrite $I_3^{\pm}$ as follows.
\begin{equation}\nonumber
  \begin{split}
   I_3^{\pm} = &    \pm 2 \int_0^t \int_{\mathbb{R}^2} \Big( \int_{-\infty}^{x_2} \p_1 P \Omega_{\pm} \; dy \Big) |\p_1^2 b_2|^2 \; dx \; d\tau \\
     = & \pm 2 \int_0^t \int_{\mathbb{R}^2} \Big( \int_{-\infty}^{x_2} \Big \{\sum_{ i,j=1,2;2,1;2,2}(-\Delta)^{-1} \p_1 \p_i \p_j(u_i u_j - b_i b_j) \\
     & + (-\Delta)^{-1} \p_1^3(|u_1|^2 - |b_1|^2) \Big\}\Omega_{\pm}\; dy \Big) |\p_1^2 b_2|^2 \; dx \; d\tau,
\end{split}
\end{equation}
which can be controlled directly by
\begin{equation}\nonumber
\begin{split}
& \sup_{0\leq \tau \leq t} \|\p_1^2 b_2\|_{L^2}^2\int_0^t \Big(\sum_{i,j=1,2;2,1;2,2}\|(-\Delta)^{-1} \p_1 \p_i \p_j(u_i u_j - b_i b_j)\|_{L_{x_1}^\infty L_{x_2}^2}  \\
     &+\|(-\Delta)^{-1} \p_1^3(|u_1|^2 - |b_1|^2)\|_{L_{x_1}^\infty L_{x_2}^2} \Big)\|\Omega_{\pm}\|_{L_{x_1}^\infty L_{x_2}^2} \; d\tau.\\
\end{split}
\end{equation}
Therefore, $I_3^{\pm}$ can be bounded like

\begin{equation}\label{I3-temp}
\begin{split}
 I_3^{\pm}     \leq & \sup_{0\leq \tau \leq t} \|\p_1^2 b_2\|_{L^2}^2\int_0^t \Big(\sum_{i,j=1,2;2,1;2,2}\big(\|u_i \p_2 u_j\|_{L^2}^\frac{1}{2} \|\p_1(u_i \p_2 u_j)\|_{L^2}^\frac{1}{2}  \\
 &\quad + \|b_i \p_2 b_j\|_{L^2}^\frac{1}{2}\|\p_1(b_i \p_2 b_j)\|_{L^2}^\frac{1}{2}\big)
     +\|u_1 \p_2 u_2\|_{L^2}^\frac{1}{2}\|\p_1(u_1 \p_2 u_2)\|_{L^2}^\frac{1}{2}  \\
     & \qquad \qquad + \|b_1 \p_2 b_2\|_{L^2}^\frac{1}{2}\|\p_1(b_1 \p_2 b_2)\|_{L^2}^\frac{1}{2} \Big) \|\Omega_{\pm}\|_{L^2}^\frac{1}{2}\|\p_1 \Omega_{\pm}\|_{L^2}^\frac{1}{2} \; d\tau.
 \end{split}
\end{equation}
According to anisotropic type estimate, the following inequalities hold when the right-hand sides are all bounded.
\begin{equation}\nonumber
  \begin{split}
    \|f \p_2 g\|_{L^2} \leq& C \|f\|_{L_{x_1}^2 L_{x_2}^\infty} \|\p_2 g\|_{L_{x_1}^\infty L_{x_2}^2} \\
    \leq& C \|f\|_{L^2}^\frac{1}{2} \|\p_2 f\|_{L^2}^\frac{1}{2} \|\p_2 g\|_{L^2}^\frac{1}{2} \|\p_1 \p_2 g\|_{L^2}^\frac{1}{2}, \\
    \|\p_1 (f \p_2 g)\|_{L^2} \leq& C \|\p_1f \p_2 g\|_{L^2} + \|f \p_1\p_2 g\|_{L^2} \\
    \leq& C \|\p_1 f\|_{L^2}^\frac{1}{2} \|\p_1 \p_2 f\|_{L^2}^\frac{1}{2} \|\p_2 g\|_{L^2}^\frac{1}{2} \|\p_1 \p_2 g\|_{L^2}^\frac{1}{2} \\
    & + \|f\|_{L^\infty} \|\p_1\p_2g\|_{L^2} \\
    \leq& C \|\p_1 f\|_{L^2}^\frac{1}{2} \|\p_1 \p_2 f\|_{L^2}^\frac{1}{2} \|\p_2 g\|_{L^2}^\frac{1}{2} \|\p_1 \p_2 g\|_{L^2}^\frac{1}{2} \\
    & + \|f\|_{L^2}^\frac{1}{4} \|\p_1f\|_{L^2}^\frac{1}{4} \|\p_2 f\|_{L^2}^\frac{1}{4} \|\p_1\p_2 f\|_{L^2}^\frac{1}{4} \|\p_1\p_2g\|_{L^2}.
  \end{split}
\end{equation}
Using above inequalities, \eqref{I3-temp} then becomes
\begin{equation}\label{I3}
\begin{split}
    I_3^{\pm} \leq & \sup_{0\leq \tau \leq t} (\|u\|_{H^2}^\frac{1}{2} + \|b\|_{H^2}^\frac{1}{2})\|\Omega_{\pm}\|_{L^2}^\frac{1}{2} \|\p_1^2 b_2\|_{L^2}^2\\
    & \cdot \int_0^t (\|\nabla u\|_{H^2}^\frac{3}{2} + \|\p_2 b\|_{H^1}^\frac{3}{2})\|\p_1 \Omega_{\pm}\|_{L^2}^\frac{1}{2} \; d\tau \\
     \leq & \; C E_0^\frac{3}{2} (E_0 + E_1).
  \end{split}
\end{equation}
Taking all estimate for $I_1^{\pm}$ through $I_3^{\pm}$ into account, namely \eqref{I1}, \eqref{I2} and \eqref{I3}, we complete the proof of this Proposition.

\end{proof}

The next Proposition plays a decisive role in proving our main theorem. It shows the inherent structure of system concerned can help improve the nonlinear order of some wild terms.
As a result, the wildest nonlinear term will transfer to some other good terms.

\begin{proposition}\label{prop1}

For $A_{mp}$, $E_0$ and $E_1$ defined in \eqref{energy}, it holds that
\begin{equation}\nonumber
\begin{split}
\int_0^t \int_{\mathbb{R}^2} \p_2 u_2 |\p_1^2 b_2|^2 \; dx \; d\tau \leq & \; CE_0 ( A_+ + A_-)^\frac{1}{2} \\
& + C (E_0^\frac{1}{2}+ E_0 + E_0^\frac{3}{2})(E_0 + E_1).
\end{split}
\end{equation}
\end{proposition}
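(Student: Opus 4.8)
The plan is to follow the substitution announced in the introduction: using the second component of the magnetic equation in \eqref{mhd0}, replace $\p_2 u_2$ by $\p_t b_2 + u\cdot\nabla b_2 - b\cdot\nabla u_2$, so that, writing $v=\p_1^2 b_2$ for brevity,
$$\int_0^t\int_{\R^2}\p_2 u_2\,v^2\,dx\,d\tau=\int_0^t\int_{\R^2}\p_t b_2\,v^2+\int_0^t\int_{\R^2}u\cdot\nabla b_2\,v^2-\int_0^t\int_{\R^2}b\cdot\nabla u_2\,v^2.$$
My first move would be to treat the first two terms together, since neither is controllable in isolation (the convection piece $u_1\p_1 b_2\,v^2$ would need $\|\nabla b\|_{L^\infty}$ and leads to linear-in-time growth). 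Grouping them as the material derivative $\int_0^t\int(\p_t+u\cdot\nabla)b_2\,v^2$ and integrating by parts in time (using $\int(\p_t+u\cdot\nabla)\phi\,dx=\tfrac{d}{dt}\int\phi\,dx$ by incompressibility) produces a boundary term $\big[\int b_2 v^2\big]_0^t$ together with $-2\int_0^t\int b_2\,v\,(\p_t+u\cdot\nabla)v$. The point of this grouping is that the top-order convection $u\cdot\nabla v$ cancels exactly, leaving only the commutator $[\p_1^2,u\cdot\nabla]b_2$ and the two forcing contributions $\p_1^2(b\cdot\nabla u_2)$ and $\p_1^2\p_2 u_2$; the boundary term is cubic and bounded by $\sup_t\|b_2\|_{L^\infty}\|v\|_{L^2}^2\leq CE_0^{3/2}$, which is absorbed into $CE_0^{1/2}(E_0+E_1)$.

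Next I would isolate the genuinely wild term from the Lorentz contribution. Splitting $b\cdot\nabla u_2=b_1\p_1 u_2+b_2\p_2 u_2$, the piece $-\int_0^t\int b_1\p_1 u_2\,v^2$ is the crucial one, and I would bound it by Cauchy–Schwarz as
$$\Big(\int_0^t\int_{\R^2}|\p_1 u_2|^2 v^2\Big)^{1/2}\Big(\int_0^t\int_{\R^2}|b_1|^2 v^2\Big)^{1/2}.$$
The first factor is handled by $\int_0^t\int|\p_1 u_2|^2 v^2\leq \sup_t\|v\|_{L^2}^2\int_0^t\|\p_1 u_2\|_{L^\infty}^2\leq CE_0^2$, using $H^2(\R^2)\hookrightarrow L^\infty$ together with $\nabla u\in L^2_tH^2$ from $E_0$; the second factor is exactly where $A_\pm$ enters, since $b_1=\tfrac12(\Omega_+-\Omega_-)$ gives $|b_1|^2\leq\tfrac12(\Omega_+^2+\Omega_-^2)$ and hence $\int_0^t\int|b_1|^2 v^2\leq C(A_++A_-)$. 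This produces precisely the $CE_0(A_++A_-)^{1/2}$ contribution in the statement.

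It then remains to absorb everything else into $C(E_0^{1/2}+E_0+E_0^{3/2})(E_0+E_1)$: the commutator terms, the forcing terms $b_2\p_2\p_1^2 u_2$, $\p_1^2(b\cdot\nabla u_2)$, $\p_1^2\p_2 u_2$ coming from the time integration by parts, and the remaining Lorentz piece $-\int b_2\p_2 u_2\,v^2$. For these I would repeatedly integrate by parts in $x_1$ and $x_2$ and invoke the incompressibility relations $\p_1 u_1=-\p_2 u_2$ and $\p_1 b_1=-\p_2 b_2$, the aim being to route every derivative either onto $\nabla u$ (controlled in $L^2_tH^2$) or onto $\p_2 b$ (controlled in $L^2_tH^1$), after which Cauchy–Schwarz in time closes the estimate; the anisotropic Gagliardo–Nirenberg inequalities already used in the proof of Proposition \ref{prop0}, such as $\|f\|_{L^\infty_{x_1}L^2_{x_2}}\leq C\|f\|_{L^2}^{1/2}\|\p_1 f\|_{L^2}^{1/2}$, are the main workhorse here, and the extra powers of $E_0$ arise from the quartic order of the commutator and forcing terms.

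I expect the main obstacle to be exactly this last step rather than the clean $A_\pm$ estimate. The structural tension is that $v=\p_1^2 b_2$ sits at the very top of the available regularity: it can be kept only in $L^2_x$ (controlled by $\sup_t E_0$, or weighted by $\Omega_\pm^2$ through $A_\pm$), it must never be differentiated once more (no $\p_1^3 b_2$ is available) nor placed in any $L^\infty$-type norm, and at the same time $\nabla u$ is unavailable in $L^1_tL^\infty$. The delicate part of the argument is therefore the bookkeeping that guarantees, after each integration by parts, that both copies of $v$ remain in $L^2_x$ while the accompanying coefficient is paired with a genuinely dissipative factor ($\nabla u$ or $\p_2 b$) or with a factor of $b_1$ or $u_1$ feeding $A_\pm$. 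The most dangerous residues are those of the form $\p_2 u_2\,v^2$ and $u_2\p_2 b_2\,v^2$, for which one must use incompressibility to trade the tangential velocity gradient for $\p_2$-derivatives and thereby recover a second $L^2_t$ factor; verifying that no term escapes this dichotomy is where the real work lies.
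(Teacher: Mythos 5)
Your outline reproduces the two sound pillars of the paper's proof: the substitution $\p_2 u_2 = \p_t b_2 + u\cdot\nabla b_2 - b\cdot\nabla u_2$, the Cauchy--Schwarz estimate of $-\int_0^t\int b_1\p_1 u_2\,v^2$ against $A_\pm$ via $b_1=\tfrac12(\Omega_+-\Omega_-)$ (the paper's $J_1$), and the time integration by parts in which the top-order convection $u\cdot\nabla v$ cancels (the paper's $J_3$). The genuine gap is in what you relegate to the ``absorb everything else'' step. After your time integration by parts, the remainder contains terms of the exact form $b_2\,\p_2 u_2\,v^2$: the Lorentz piece $J_2=-\int_0^t\int b_2\,\p_2 u_2\,v^2\,dx\,d\tau$ itself, the term $-2\int b_2 v\,\p_1^2 b_2\,\p_2 u_2$ coming from $\p_1^2 b\cdot\nabla u_2$, and the commutator term $4\int b_2\,\p_1 u_1\,v^2=-4\int b_2\,\p_2 u_2\,v^2$ (by incompressibility). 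None of these can be closed by spatial integrations by parts, incompressibility swaps, and anisotropic H\"older, which is your proposed cure. Every such route fails for a structural reason: any integration by parts in $x_1$ or $x_2$ puts a derivative on $v=\p_1^2 b_2$ and creates $\p_1^3 b_2$ or $\p_1^2\p_2 b_2$, which the $H^2$ framework does not control; pairing $\p_2 u_2$ in $L^\infty_x$ against $\|v\|_{L^2}^2$ requires $\|\nabla u\|_{L^1_t L^\infty_x}$, which is exactly what is unavailable; pairing it in $L^2_x$ requires $\|v\|_{L^4}^2\leq C\|v\|_{L^2}\|\nabla v\|_{L^2}$, again a third derivative of $b$; and the coefficient $b_2$ supplies only \emph{half} an $L^2_t$ factor, via $\|b_2\|_{L^\infty}\leq C\|b\|_{H^2}^{1/2}\|\p_2 b\|_{H^1}^{1/2}$, leaving $\int_0^t\|\p_2 b\|_{H^1}^{1/2}\|\nabla u\|_{H^2}\,d\tau$, which grows in time. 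Your hope of ``recovering a second $L^2_t$ factor'' by trading $\p_2 u_2$ for $-\p_1 u_1$ cannot work: both are velocity gradients, and the obstruction is the factor $v^2$, which has no time decay --- the paper stresses precisely that no decay of $\|\p_1^2 b_2\|_{L^2}$ is available, which is why this proposition is needed at all.

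The idea you are missing is that the paper applies the same substitution a \emph{second} time inside these terms: writing $b_2\,\p_2 u_2\,v^2=b_2(\p_t b_2+u\cdot\nabla b_2-b\cdot\nabla u_2)v^2$, the piece $b_2(b\cdot\nabla u_2)v^2$ is now quartic in the unknowns and closes by anisotropic estimates, being bounded by $C\|b\|_{H^2}^3\|\p_2 b\|_{H^1}\|\nabla u\|_{H^2}$ --- crucially, the two $L^\infty$ factors of $b$ together yield a \emph{full} power of $\|\p_2 b\|_{H^1}$ --- while the piece $\tfrac12(\p_t+u\cdot\nabla)|b_2|^2\,v^2$ is treated by one more time integration by parts, producing the exact derivative $-\tfrac12\frac{d}{dt}\int|b_2|^2 v^2\,dx$ (controlled by $E_0^2$ at the endpoints) plus commutator and forcing terms that now carry the prefactor $|b_2|^2$ and therefore do close. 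In the paper's bookkeeping, $J_3$ reduces, modulo a total time derivative and good terms, to $6J_2$, so this iterated substitution is what closes both $J_2$ and $J_3$; without it, your remainder estimate cannot be completed uniformly in time.
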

\begin{proof}
Using the second equation of the  system \eqref{mhd0}, we have
\begin{equation}\nonumber
\begin{split}
  \int_{\mathbb{R}^2} \p_2 u_2 |\p_1^2 b_2|^2 \;dx =& \int_{\mathbb{R}^2} (\p_t b_2 + u \cdot \nabla b_2 - b \cdot \nabla u_2) |\p_1^2 b_2|^2 \; dx \\
  = & \; J_1 + J_2 + J_3,
\end{split}
\end{equation}
where,
\begin{equation}\nonumber
  \begin{split}
    J_1 =& -\int_{\mathbb{R}^2} b_1 \p_1 u_2 |\p_1^2 b_2|^2 \; dx, \\
    J_2 =& -\int_{\mathbb{R}^2} b_2 \p_2 u_2 |\p_1^2 b_2|^2 \; dx, \\
    J_3 =& \int_{\mathbb{R}^2} (\p_t b_2 + u \cdot \nabla b_2) |\p_1^2 b_2|^2 \; dx.
  \end{split}
\end{equation}
Applying H\"{o}lder's inequality and notice that $b_1 = \frac{\Omega_+ - \Omega_- }{2}$,  we have
\begin{equation}\label{J1}
  \begin{split}
   & \int_0^t J_1 \;d\tau \\
    \leq & \Big(\int_0^t \int_{\mathbb{R}^2} |\p_1 u_2|^2 |\p_1^2 b_2|^2 \; dx \; d\tau\Big)^\frac{1}{2}\Big( \int_0^t \int_{\mathbb{R}^2} |b_1|^2 |\p_1^2 b_2|^2 \; dx \; d\tau\Big)^\frac{1}{2} \\
    \leq & C E_0 ( A_+ + A_-)^\frac{1}{2}.
  \end{split}
\end{equation}
For the next term $J_2$, we can split it into two parts and use the evolution of $b_2$ again. 
\begin{equation}\nonumber
  \begin{split}
    J_2 =& -\int_{\mathbb{R}^2} b_2 \p_2 u_2 |\p_1^2 b_2|^2 \; dx \\
        =& -\int_{\mathbb{R}^2} b_2 |\p_1^2 b_2|^2 ( - b \cdot \nabla u_2 + \p_t b_2 + u \cdot \nabla b_2) \; dx \\
        =& \; J_{2,1} + J_{2,2}.
  \end{split}
\end{equation}
Like before, we can use the anisotropic type inequalities to control  $J_{2,1}$ as follows
\begin{equation}\nonumber
  \begin{split}
    J_{2,1} =& \int_{\mathbb{R}^2} b_2 |\p_1^2 b_2|^2  b \cdot \nabla u_2 \; dx \\
    \leq& \; C \|b\|_{L^\infty}^2 \|\nabla u\|_{L^\infty} \|b\|_{H^2}^2 \\
    \leq& \; C \|b\|_{L^2}^\frac{1}{2} \|\p_1 b\|_{L^2}^\frac{1}{2} \|\p_2 b\|_{L^2}^\frac{1}{2} \|\p_1 \p_2 b\|_{L^2}^\frac{1}{2} \|\nabla u\|_{H^2} \|b\|_{H^2}^2\\
    \leq& \; C \|b\|_{H^2}^3 \|\p_2 b\|_{H^1} \|\nabla u\|_{H^2},
  \end{split}
\end{equation}
which then implies
\begin{equation}\label{J21}
  \int_0^t J_{2,1} \;d\tau  \leq  E_0^{\frac{3}{2}} (E_0 + E_1).
\end{equation}
The second part $J_{2,2}$ needs more attention. Using integration by parts, one has
\begin{equation}\nonumber
  \begin{split}
 & \;\quad J_{2,2} \\
  &= -\frac{1}{2}\frac{d}{dt}\int_{\mathbb{R}^2} |b_2|^2 |\p_1^2 b_2|^2 \;dx +\frac{1}{2} \int_{\mathbb{R}^2} |b_2|^2 \frac{d}{dt} |\p_1^2 b_2|^2 - u \cdot \nabla |b_2|^2|\p_1^2 b_2|^2 \; dx \\
    & = -\frac{1}{2}\frac{d}{dt}\int_{\mathbb{R}^2} |b_2|^2 |\p_1^2 b_2|^2 \;dx + \int_{\mathbb{R}^2} |b_2|^2\p_1^2 b_2 \p_1^2(\p_2 u_2 + b \cdot \nabla u_2 -u \cdot \nabla b_2)\\
     & \qquad \qquad - \frac{1}{2}u \cdot \nabla |b_2|^2|\p_1^2 b_2|^2 \; dx \\
    & = -\frac{1}{2}\frac{d}{dt}\int_{\mathbb{R}^2} |b_2|^2 |\p_1^2 b_2|^2\;dx + \int_{\mathbb{R}^2} |b_2|^2\p_1^2 b_2 \\
    & \qquad \big\{\p_1^2(\p_2 u_2 + b \cdot \nabla u_2) -[\p_1^2, u \cdot \nabla] b_2\big\} \; dx .
 \end{split}
 \end{equation}
We now estimate them  term by term.
Applying anisotropic type inequalities, it yields that
\begin{equation}\nonumber
  \begin{split}
    &\int_{\mathbb{R}^2} |b_2|^2 \p_1^2 b_2 \p_1^2 \p_2 u_2 \; dx \\
    \leq& C \|b_2\|_{L^\infty}^2 \|\p_1^2 b_2\|_{L^2} \|\p_1^2 \p_2 u_2\|_{L^2} \\
    \leq& C \|b_2\|_{L^2}^\frac{1}{2}\|\p_1 b_2\|_{L^2}^\frac{1}{2}\|\p_2 b_2\|_{L^2}^\frac{1}{2}\|\p_1 \p_2 b_2\|_{L^2}^\frac{1}{2} \|\p_1^2 b_2\|_{L^2} \|\p_1^2 \p_2 u_2\|_{L^2}\\
    \leq& C \|b\|_{H^2}^2 \|\p_2 b\|_{H^1} \|\nabla u\|_{H^2},\\
\end{split}
\end{equation}
and,
\begin{equation}\nonumber
\begin{split}
    &\int_{\mathbb{R}^2} |b_2|^2 \p_1^2 b_2 \p_1^2 (b \cdot \nabla u_2) \; dx \\
    \leq& C \|b_2\|_{L^\infty}^2 \|\p_1^2 b_2\|_{L^2} (\|\p_1^2 b\|_{L^2} \|\nabla u_2\|_{L^\infty} + \|\p_1 b\|_{L_{x_1}^2 L_{x_2}^\infty} \|\p_1 \nabla u_2\|_{L_{x_1}^\infty L_{x_2}^2} \\
    & + \|b\|_{L^\infty}\|\p_1^2 \nabla u_2\|_{L^2}) \\
    \leq& C \|b\|_{H^2}^3 \|\p_2 b\|_{H^1} \|\nabla u\|_{H^2},\\
  \end{split}
\end{equation}
moreover,
\begin{equation}\nonumber
\begin{split}
  &-\int_{\mathbb{R}^2} |b_2|^2\p_1^2 b_2 [\p_1^2, u \cdot \nabla] b_2 \; dx\\
  \leq & C \|b_2\|_{L^\infty}^2 \|\p_1^2 b_2\|_{L^2} (\|\p_1^2 u\|_{L^2} \|b_2\|_{L^\infty} + \|\p_1 u\|_{L^\infty}\|\p_1 \nabla b_2\|_{L^2})\\
  \leq& C \|b\|_{H^2}^3 \|\p_2 b\|_{H^1} \|\nabla u\|_{H^2}.
\end{split}
\end{equation}
The first part in $J_{2,2}$ behaves well and can be bounded easily. Combing the above three parts together, we finally  derive the estimate for $J_{2,2}$.
\begin{equation}\nonumber
\begin{split}
 J_{2,2} \leq -\frac{1}{2}\frac{d}{dt}\int_{\mathbb{R}^2} |b_2|^2 |\p_1^2 b_2|^2\;dx + C (\|b\|_{H^2}^2 + \|b\|_{H^2}^3) \|\p_2 b\|_{H^1} \|\nabla u\|_{H^2}.
  \end{split}
\end{equation}
Combing the estimate for $J_{2,1}$ and $J_{2,2}$, we then arrive at the time integral estimate for $J_2$
\begin{equation}\label{J2}
  \int_0^t J_2 \; d\tau \leq  C E_0^2 + C (E_0 + E_0^\frac{3}{2})(E_0 + E_1).
\end{equation}
Next, we turn to deal with $J_3$. To this end,  we rewrite $J_3$ as follows.

\begin{equation}\nonumber
  \begin{split}
    J_3
    =& \; \frac{d}{dt}\int_{\mathbb{R}^2} b_2 |\p_1^2 b_2|^2 \; dx - \int_{\mathbb{R}^2}  b_2 \frac{d}{dt} |\p_1^2 b_2|^2 + u \cdot \nabla b_2 |\p_1^2 b_2|^2 \; dx \\
    = & \; \frac{d}{dt}\int_{\mathbb{R}^2} b_2 |\p_1^2 b_2|^2 \; dx + \int_{\mathbb{R}^2}  2 b_2 \p_1^2 b_2 \p_1^2 (u\cdot \nabla b_2 - b \cdot \nabla u_2 - \p_2 u_2) \\
    &\qquad + u \cdot \nabla b_2 |\p_1^2 b_2|^2 \; dx \\
    = & \; \frac{d}{dt}\int_{\mathbb{R}^2} b_2 |\p_1^2 b_2|^2 \; dx + \int_{\mathbb{R}^2} 2 b_2 \p_1^2 b_2 (\p_1^2 u\cdot \nabla b_2 + 2\p_1 u \cdot \nabla \p_1 b_2) \\
    & \qquad \qquad - 2b_2 \p_1^2 b_2 \p_1^2(b \cdot \nabla u_2 + \p_2 u_2)\; dx.
\end{split}
\end{equation}
More specifically, $J_3$ consists of the following terms

\begin{equation}\nonumber
\begin{split}
& \; \frac{d}{dt}\int_{\mathbb{R}^2} b_2 |\p_1^2 b_2|^2 \; dx + \int_{\mathbb{R}^2}2  b_2 \p_1^2 b_2 (\p_1^2 u\cdot \nabla b_2 + 2\p_1 u_2 \p_2 \p_1 b_2 + 2\p_1 u_1 \p_1^2 b_2)\\
    & - 2b_2 \p_1^2 b_2 \p_1^2 b_1 \p_1 u_2 - 2b_2 \p_1^2 b_2 \p_1^2 b_2 \p_2 u_2 - 4 b_2 \p_1^2 b_2 \p_1 b \cdot \nabla \p_1 u_2 - 2 b_2 \p_1^2 b_2  b \cdot \nabla \p_1^2 u_2 \\
    & +2  \p_2 b_2 \p_1^2 b_2 \p_1^2 u_2 -2 \p_1 b_2 \p_1 \p_2 b_2 \p_1^2 u_2-2 b_2 \p_1 \p_2 b_2 \p_1^3 u_2 \; dx.
\end{split}
\end{equation}
Similar to the treatment for $J_{2,2}$, one has
\begin{equation}\nonumber
\begin{split}
 J_3    \leq  & \; \frac{d}{dt}\int_{\mathbb{R}^2} b_2 |\p_1^2 b_2|^2 \; dx - 6 \int_{\mathbb{R}^2} b_2 \p_2 u_2 |\p_1^2 b_2|^2 \; dx \\
 & \qquad  + C (\|b\|_{H^2} + \|b\|_{H^2}^2) \|\p_2 b\|_{H^1} \|\nabla u\|_{H^2}.
  \end{split}
\end{equation}
Notice that the second term on the right hand side in above inequality is
$$\int_{\mathbb{R}^2} b_2 \p_2 u_2 |\p_1^2 b_2|^2 dx,$$
which is just the term $J_2$ and has been estimated before in \eqref{J2}. We can then obtain the control for $J_3$ as follows.

\begin{equation}\nonumber
  \begin{split}
    J_3
    \leq& \; \frac{d}{dt}\int_{\mathbb{R}^2} b_2 |\p_1^2 b_2|^2 \; dx - 3\frac{d}{dt}\int_{\mathbb{R}^2} |b_2|^2 |\p_1^2 b_2|^2 \;dx \\
    &+ C (\|b\|_{H^2} + \|b\|_{H^2}^2 + \|b\|_{H^2}^3) \|\p_2 b\|_{H^1} \|\nabla u\|_{H^2}.
  \end{split}
\end{equation}
Namely,
\begin{equation}\label{J3}
  \int_0^t J_3 \; d\tau \leq C (E_0^\frac{3}{2} + E_0^2) + C (E_0^\frac{1}{2}+ E_0 + E_0^\frac{3}{2})(E_0+ E_1).
\end{equation}
Taking all estimates for $J_1$ through $J_3$ into account, namely \eqref{J1}, \eqref{J2} and \eqref{J3}, we then complete the proof of this Proposition.

\end{proof}

\section{Energy estimates}\label{energy-est}

In this section, we are going to show the energy estimate for $A_{\pm}$, $E_0$ and $E_1$.
The propositions we proved in Section 2 will play an important role. We first prove the following lemma, which offers 
 the $ a \; priori$ estimates for $A_{\pm}$.

\begin{lemma}\label{lem0}
For $A_{mp}$, $E_0$ and $E_1$ defined in \eqref{energy}, it holds  for any $t >0$  that
  \begin{equation}\nonumber
  \begin{split}
    A_{\pm}(t) =& \int_0^t \int_{\mathbb{R}^2} |\Omega_{\pm}|^2 |\p_1^2 b_2|^2 \; dx \; d\tau \\
    \leq& \; C \Big\{(E_0 + E_0^\frac{3}{2})(E_0 + E_1) +   E_0^{\frac{7}{4}}(E_0^{\frac{3}{4}} + E_1^{\frac{3}{4}})+  E_0^\frac{1}{2} A_{\pm} + E_0 A_{\pm}^\frac{1}{2} \Big\}.
  \end{split}
  \end{equation}
\end{lemma}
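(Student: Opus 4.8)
The plan is to convert the definition of $G_{\pm}$ in \eqref{G} into an identity for $|\Omega_{\pm}|^2$ and feed it directly into $A_{\pm}$. Since $|\Omega_{\pm}|^2 = G_{\pm} \pm \p_t\int_{-\infty}^{x_2}|\Omega_{\pm}|^2\,dy$, multiplying by $|\p_1^2 b_2|^2$ and integrating over $\mathbb{R}^2\times[0,t]$ gives
\begin{equation}\nonumber
A_{\pm} = \int_0^t\!\!\int_{\mathbb{R}^2} G_{\pm}|\p_1^2 b_2|^2\,dx\,d\tau \pm \int_0^t\!\!\int_{\mathbb{R}^2}\Big(\p_t\!\int_{-\infty}^{x_2}|\Omega_{\pm}|^2\,dy\Big)|\p_1^2 b_2|^2\,dx\,d\tau =: \mathrm{I} + \mathrm{II}.
\end{equation}
The piece $\mathrm{I}$ is exactly the quantity controlled by Proposition \ref{prop0}, which already supplies the contributions $(E_0+E_0^{3/2})(E_0+E_1)$, $E_0^{1/2}A_{\pm}$ and $E_0 A_{\pm}^{1/2}$ appearing in the statement. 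All remaining work is to bound $\mathrm{II}$, which I expect to be the source of the term $E_0^{7/4}(E_0^{3/4}+E_1^{3/4})$.

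For $\mathrm{II}$ I would integrate by parts in time. The boundary contributions $\int_{\mathbb{R}^2}\big(\int_{-\infty}^{x_2}|\Omega_{\pm}|^2\,dy\big)|\p_1^2 b_2|^2\,dx$ at $\tau=t$ and $\tau=0$ are handled by the anisotropic bound $\big\|\int_{-\infty}^{x_2}|\Omega_{\pm}|^2\,dy\big\|_{L^\infty}\le \|\Omega_{\pm}\|_{L_{x_1}^\infty L_{x_2}^2}^2 \le C\|\Omega_{\pm}\|_{L^2}\|\p_1\Omega_{\pm}\|_{L^2}$, which is of size $E_0$; hence both boundary terms are $O(E_0^2)$ and are absorbed into the first product. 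The surviving integral is $\mp\int_0^t\int_{\mathbb{R}^2} W\,\p_t|\p_1^2 b_2|^2\,dx\,d\tau$ with $W := \int_{-\infty}^{x_2}|\Omega_{\pm}|^2\,dy$; using $\p_t|\p_1^2 b_2|^2 = 2\p_1^2 b_2\,\p_1^2\p_t b_2$ and the second equation of \eqref{mhd0} in the form $\p_t b_2 = \p_2 u_2 + b\cdot\nabla u_2 - u\cdot\nabla b_2$, I split $\mathrm{II}$ into a linear piece carrying $\p_1^2\p_2 u_2$, a Lorentz piece carrying $\p_1^2(b\cdot\nabla u_2)$, and a transport piece carrying $\p_1^2(u\cdot\nabla b_2)$.

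The structural heart of the argument is the weight $W$. Since $\p_2 W = |\Omega_{\pm}|^2$ and $\p_1 W = 2\int_{-\infty}^{x_2}\Omega_{\pm}\p_1\Omega_{\pm}\,dy$, and since incompressibility yields $\p_1\Omega_{\pm} = \p_1 u_1 \pm \p_1 b_1 = -\p_2 u_2 \mp \p_2 b_2$, the factor $\p_1\Omega_{\pm}$ is square-integrable in time through \emph{both} the viscous dissipation ($E_0$) and the magnetic dissipation ($E_1$); this is precisely what makes the otherwise uncontrolled bad-direction weight usable. In the transport piece I would isolate the top-order part $W\,\p_1^2 b_2\,(u\cdot\nabla\p_1^2 b_2) = \tfrac12 W\,u\cdot\nabla|\p_1^2 b_2|^2$ and integrate by parts, using $\nabla\cdot u=0$ to rewrite it as $-\tfrac12\int(u\cdot\nabla W)|\p_1^2 b_2|^2$; the component $u_2\,\p_2 W = u_2|\Omega_{\pm}|^2$ then regenerates $\int u_2|\Omega_{\pm}|^2|\p_1^2 b_2|^2 \le C\|u_2\|_{L^\infty}A_{\pm}\le CE_0^{1/2}A_{\pm}$, which is the mechanism that forces an $A_{\pm}$ term on the right. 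Every other contribution is a product of the weight $W$ (bounded by $\|\Omega_{\pm}\|_{L^2}\|\p_1\Omega_{\pm}\|_{L^2}$), the bounded factor $\|\p_1^2 b_2\|_{L^2}$, and one genuine dissipative quantity ($\|\nabla u\|_{H^2}$ or $\|\p_2 b\|_{H^1}$), and is closed by H\"older in time together with the anisotropic Gagliardo–Nirenberg inequalities already exploited in Proposition \ref{prop0}; balancing the available $L^2_t$ factors against the non-decaying ones is what produces the fractional powers $E_0^{7/4}(E_0^{3/4}+E_1^{3/4})$.

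The main obstacle I anticipate is the linear piece $\p_1^2\p_2 u_2$ together with the transport commutator $[\p_1^2,\,u\cdot\nabla]b_2$: here $\nabla u$ is available only in $L^2_t$ (never in $L^1_t$) and the companion factor $\p_1^2 b_2$ carries no time decay, so a naive estimate would grow in $t$. The whole bound therefore rests on routing every time integral through the two dissipative norms, which is possible only because of the identity $\p_1\Omega_{\pm} = -\p_2 u_2\mp\p_2 b_2$. One must also take care that quantities such as $\p_1^2\p_2 b_2$ lie outside the available $H^2$ regularity of $b$, which is exactly why the transport term has to be symmetrized into $\tfrac12 u\cdot\nabla|\p_1^2 b_2|^2$ before any derivative is moved onto $b$. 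Collecting $\mathrm{I}$ from Proposition \ref{prop0} with the estimates for the three pieces of $\mathrm{II}$ then yields the stated inequality.
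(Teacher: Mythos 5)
Your proposal is correct and follows essentially the same path as the paper's own proof: the identical splitting of $A_{\pm}$ into the $G_{\pm}$ part (closed by Proposition \ref{prop0}) plus the time-derivative part, the same integration by parts in time with boundary terms bounded via $\|\Omega_{\pm}\|_{L_{x_1}^\infty L_{x_2}^2}^2\|\p_1^2 b_2\|_{L^2}^2\le CE_0^2$, the same substitution of the $b_2$ equation, and the same symmetrization of the top-order transport term so that $u_2\,\p_2 W=u_2|\Omega_{\pm}|^2$ regenerates $CE_0^{1/2}A_{\pm}$, while the remaining piece $u_1\p_1 W$, handled by the anisotropic inequalities and H\"older in time, is precisely what produces $CE_0^{7/4}(E_0^{3/4}+E_1^{3/4})$. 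The only difference is presentational: you make explicit the identity $\p_1\Omega_{\pm}=-\p_2 u_2\mp\p_2 b_2$ that the paper uses only implicitly to route every time integral through the dissipative norms $E_0$ and $E_1$.
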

\begin{proof}

Notice that 
\begin{equation}\label{lem0-eq0}
\begin{split}
&\int_0^t \int_{\mathbb{R}^2} |\Omega_{\pm}|^2  |\p_1^2 b_2|^2 \; dx \; d\tau \\
= & \; C \int_0^t\int_{\mathbb{R}^2} (|\Omega_{\pm}|^2 \mp \frac{1}{2}\frac{d}{dt}\int_{-\infty}^{x_2} |\Omega_{\pm}|^2 \; dy) |\p_1^2 b_2|^2 \; dx \; d\tau\\
& \pm C \int_0^t\int_{\mathbb{R}^2}  \Big(\frac{1}{2}\frac{d}{dt}\int_{-\infty}^{x_2} |\Omega_{\pm}|^2 \; dy \Big)|\p_1^2 b_2|^2 \; dx\; d\tau.
\end{split}
\end{equation}
With the help of  Proposition \ref{prop0}, the first term on the right hand side of \eqref{lem0-eq0} can be controlled easily by
\begin{equation}\label{3.3}
C(E_0 + E_0^\frac{3}{2})(E_0 + E_1) + CE_0^\frac{1}{2} A_{\pm} + C E_0 A_{\pm}^\frac{1}{2}.
\end{equation}
Now, let us focus on the second term on the right hand side of \eqref{lem0-eq0}. Using integration by parts, it holds that 
\begin{align}
    &\mp\int_0^t\int_{\mathbb{R}^2} \Big( \frac{1}{2}\frac{d}{dt}\int_{-\infty}^{x_2} |\Omega_{\pm}|^2 \; dy \Big)|\p_1^2 b_2|^2 \; dx\; d\tau \nonumber \\
    =& \mp\int_0^t\frac{d}{dt}\int_{\mathbb{R}^2} \Big( \frac{1}{2}\int_{-\infty}^{x_2} |\Omega_{\pm}|^2 \; dy \Big)|\p_1^2 b_2|^2 \; dx\; d\tau \nonumber\\
    &\pm\int_0^t \int_{\mathbb{R}^2}  \frac{1}{2}\int_{-\infty}^{x_2} |\Omega_{\pm}|^2 \; dy \frac{d}{dt}|\p_1^2 b_2|^2 \; dx\; d\tau \nonumber\\
    \leq & \; C\big(\|\Omega_{\pm}(t, \cdot)\|_{L_{x_1}^\infty L_{x_2}^2}^2 \|\p_1^2 b_2(t, \cdot)\|_{L^2}^2 + \|\Omega_{\pm}(0, \cdot)\|_{L_{x_1}^\infty L_{x_2}^2}^2 \|\p_1^2 b_2(0, \cdot)\|_{L^2}^2\big)\nonumber\\
    & \pm \int_0^t\int_{\mathbb{R}^2}  \int_{-\infty}^{x_2} |\Omega_{\pm}|^2 \; dy \p_1^2 (\p_2 u_2 + b \cdot \nabla u_2 - u \cdot \nabla b_2) \p_1^2 b_2 \; dx\; d\tau. \nonumber
\end{align}
Which can be furthermore  bounded by the following terms

\begin{align}
 & C \sup_{0 \leq \tau \leq t} (\|u_1\|_{H^2}^2 + \|b_1\|_{H^2}^2) \|b\|_{H^2}^2 \nonumber\\
    &+ C \int_0^t\|\Omega_{\pm}(t, \cdot)\|_{L_{x_1}^\infty L_{x_2}^2}^2 (\|\p_1^2 \p_2 u_2\|_{L^2} + \|b\|_{H^2}\|\nabla u\|_{H^2}) \| \p_1^2 b_2\|_{L^2} d\tau \nonumber\\
    & -C \int_0^t\int_{\mathbb{R}^2} \int_{-\infty}^{x_2} |\Omega_{\pm}|^2 \; dy \; u \cdot \nabla \p_1^2 b_2 \p_1^2 b_2 \; dx\; d\tau.\nonumber
\end{align}
Therefore, applying H\"{o}lder's inequality, one has
\begin{align}
  &\mp\int_0^t\int_{\mathbb{R}^2}  \Big(\frac{1}{2}\frac{d}{dt}\int_{-\infty}^{x_2} |\Omega_{\pm}|^2 \; dy \Big) |\p_1^2 b_2|^2 \; dx\; d\tau \nonumber \\
    \leq & \; C E_0(t)^2 + \sup_{0 \leq \tau \leq t} \|\Omega_{\pm}\|_{L^2}\|\p_1^2 b_2\|_{L^2} \int_0^t\|\p_1 \Omega_{\pm}\|_{L^2}\|\p_1^2 \p_2 u_2\|_{L^2}\; d\tau \nonumber\\
     & + C \sup_{0 \leq \tau \leq t} \|\Omega_{\pm}\|_{L^2}\|b\|_{H^2}\|\p_1^2 b_2\|_{L^2} \int_0^t\|\p_1 \Omega_{\pm}\|_{L^2}\|\nabla u\|_{H^2} d\tau \nonumber\\
     & \mp \int_0^t\int_{\mathbb{R}^2} u_1 \Big( \int_{-\infty}^{x_2}\p_1 \Omega_{\pm} \Omega_{\pm}\; dy \Big)|\p_1^2 b_2|^2 \; dx\; d\tau \nonumber\\
     & \mp \int_0^t\int_{\mathbb{R}^2}  \frac{1}{2} u_2  |\Omega_{\pm}|^2 |\p_1^2 b_2|^2 \; dx\; d\tau. \nonumber
\end{align}
We then obtain the estimate for the last line in \eqref{lem0-eq0} as follows.
\begin{equation}\label{3.4}
\begin{split}
&\mp\int_0^t\int_{\mathbb{R}^2}  \Big(\frac{1}{2}\frac{d}{dt}\int_{-\infty}^{x_2} |\Omega_{\pm}|^2 \; dy \Big) |\p_1^2 b_2|^2 \; dx\; d\tau  \\
    \leq & \; C E_0(t)^2 + C(E_0 + E_0^\frac{3}{2})(E_0 + E_1) \\
     &+ C \int_0^t \|u_1\|_{L^\infty} \|\p_1 \Omega_{\pm}\|_{L_{x_1}^\infty L_{x_2}^2} \|\Omega_{\pm}\|_{L_{x_1}^\infty L_{x_2}^2} \|\p_1^2 b_2\|_{L^2}^2 \; dx\; d\tau \\
     &+ C\sup_{0\leq \tau \leq t} \|u_2\|_{L^\infty} \int_0^t\int_{\mathbb{R}^2}  |\Omega_{\pm}|^2 |\p_1^2 b_2|^2 \; dx\; d\tau \\
    \leq & \; C E_0(t)^2 + C(E_0 + E_0^\frac{3}{2}) (E_0  + E_1) + CE_0^{\frac{7}{4}}(E_0^{\frac{3}{4}}+E_1^{\frac{3}{4}})    + C E_0^\frac{1}{2} A_{\pm}(t).
\end{split}
\end{equation}

Combing \eqref{3.3} and \eqref{3.4} together, we then finish the proof of Lemma 3.1 and get the energy estimate for $A_{\pm}(t)$.
\end{proof}

The next lemma is designed to give the $ a \; priori$ estimate for basic energy $E_0(t)$.

\begin{lemma}
For $A_{mp}$, $E_0$ and $E_1$ defined in \eqref{energy}, it holds that

\begin{equation}\nonumber
\begin{split}
& E_0(t) = \sup_{0 \leq \tau \leq t} (\|u\|_{H^2}^2 + \|b\|_{H^2}^2) + \int_0^t \|\nabla u\|_{H^2}^2 \; d\tau\\
 \leq& \; CE_0(0) + CE_0E_1^\frac{1}{2} +  CE_0 ( A_+ + A_-)^\frac{1}{2} \\
 & + C (E_0^\frac{1}{2}+ E_0 + E_0^\frac{3}{2})(E_0 + E_1).
\end{split}
\end{equation}
\end{lemma}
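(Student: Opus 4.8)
The plan is to run the standard $H^2$ energy method on \eqref{mhd0}: apply $\p^\alpha$ for each multi-index $|\alpha|\le 2$ to both equations, pair the velocity equation with $\p^\alpha u$ and the magnetic equation with $\p^\alpha b$ in $L^2(\mathbb{R}^2)$, and sum. First I would record the structural cancellations that keep the estimate closable despite the missing magnetic diffusion. The pressure gradient drops out against the divergence-free $\p^\alpha u$; the viscous term produces the good dissipation $-\|\nabla u\|_{H^2}^2$, which I move to the left; the transport terms $\int u\cdot\nabla\p^\alpha u\cdot\p^\alpha u\,dx$ and $\int u\cdot\nabla\p^\alpha b\cdot\p^\alpha b\,dx$ vanish by $\nabla\cdot u=0$; the top-order magnetic coupling cancels,
\begin{equation}\nonumber
\int_{\mathbb{R}^2} b\cdot\nabla\p^\alpha b\cdot\p^\alpha u\,dx+\int_{\mathbb{R}^2} b\cdot\nabla\p^\alpha u\cdot\p^\alpha b\,dx=0,
\end{equation}
after integrating by parts and using $\nabla\cdot b=0$; and the linear coupling cancels since $\int_{\mathbb{R}^2}\p_2(\p^\alpha b\cdot\p^\alpha u)\,dx=0$. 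What remains is $\frac{1}{2}\frac{d}{dt}(\|u\|_{H^2}^2+\|b\|_{H^2}^2)+\|\nabla u\|_{H^2}^2$ set equal to a finite sum of commutator and remainder terms.

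The heart of the matter is the family of top-order ($|\alpha|=2$) terms arising from $\p^\alpha(b\cdot\nabla u)$ paired with $\p^\alpha b$, namely $\int_{\mathbb{R}^2}\p^\alpha b_j\,\p_j u_i\,\p^\alpha b_i\,dx$. For $\alpha=(1,1)$ and $\alpha=(0,2)$ the factor $\p^\alpha b$ already carries a $\p_2$-derivative, hence is controlled by the favorable dissipation $\|\p_2 b\|_{H^1}$ ($E_1$). The delicate index is $\alpha=(2,0)$, where I would use $\nabla\cdot b=0$ in the form $\p_1^2 b_1=-\p_1\p_2 b_2$: then every contribution except the diagonal $(i,j)=(2,2)$ carries a factor $\p_1\p_2 b_2$, again controlled by $E_1$, while the accompanying velocity gradient is bounded in $L^\infty$ by $\|\nabla u\|_{H^2}$ through $H^2(\mathbb{R}^2)\hookrightarrow L^\infty$; Cauchy--Schwarz in time then produces contributions of type $CE_0E_1^{\frac{1}{2}}$ and $C(E_0+E_0^{\frac{3}{2}})(E_0+E_1)$. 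The sole exception is the wildest term
\begin{equation}\nonumber
\int_0^t\int_{\mathbb{R}^2}\p_2 u_2\,|\p_1^2 b_2|^2\,dx\,d\tau,
\end{equation}
which has no spare $\p_2$-derivative of $b$ to extract and resists direct interpolation. This is the main obstacle, and here I invoke Proposition \ref{prop1}, which bounds it by $CE_0(A_++A_-)^{\frac{1}{2}}+C(E_0^{\frac{1}{2}}+E_0+E_0^{\frac{3}{2}})(E_0+E_1)$; this is precisely where the dispersive quantities $A_\pm$ enter the energy balance.

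All remaining terms are subcritical. The transport commutators $\int[\p^\alpha,u\cdot\nabla]b\cdot\p^\alpha b\,dx$ and $\int[\p^\alpha,u\cdot\nabla]u\cdot\p^\alpha u\,dx$, the lower-order pieces of $\p^\alpha(b\cdot\nabla u)$ (those with $|\alpha|\le 1$ or with derivatives falling on $u$), and the commutators coming from $\p^\alpha(b\cdot\nabla b)$ in the velocity equation, all place at least one derivative on $u$. I would estimate them by H\"older's inequality together with the anisotropic Ladyzhenskaya-type inequalities already used in the proof of Proposition \ref{prop0} (splitting factors into $L^\infty_{x_1}L^2_{x_2}$ and $L^2_{x_1}L^\infty_{x_2}$), always arranging to pull out one factor of $\|\nabla u\|_{H^2}$ or $\|\p_2 b\|_{H^1}$ so that the time integral closes against $E_0$ and $E_1$. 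These contribute the remaining part of $C(E_0^{\frac{1}{2}}+E_0+E_0^{\frac{3}{2}})(E_0+E_1)$.

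Finally I would integrate the differential identity over $[0,t]$: the time integral of the dissipation and the supremum of $\|u\|_{H^2}^2+\|b\|_{H^2}^2$ reassemble into $E_0(t)$, the boundary contribution at $\tau=0$ gives $CE_0(0)$, and collecting the bounds above produces the stated inequality. The only genuinely delicate step is the wild term, already dispatched by Proposition \ref{prop1}; everything else is careful but routine anisotropic interpolation.
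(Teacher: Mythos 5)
Your overall strategy is the paper's strategy (energy estimate at second order, with the wild term $\int_0^t\!\int \p_2 u_2 |\p_1^2 b_2|^2\,dx\,d\tau$ dispatched by Proposition \ref{prop1}), but there is a genuine gap: the wild term does \emph{not} arise only from the stretching term $\p^\alpha(b\cdot\nabla u)$ paired with $\p^\alpha b$. It arises a second time from the transport commutator that you classify as subcritical. Indeed, for $\alpha=(2,0)$,
\begin{equation}\nonumber
-\int_{\mathbb{R}^2}[\p_1^2,\,u\cdot\nabla]\,b_2\;\p_1^2 b_2\,dx
\;=\;-2\int_{\mathbb{R}^2}\p_1 u_1\,|\p_1^2 b_2|^2\,dx\;+\;\text{(terms with a }\p_2 b\text{ factor)}
\;=\;2\int_{\mathbb{R}^2}\p_2 u_2\,|\p_1^2 b_2|^2\,dx\;+\;\cdots,
\end{equation}
where the last equality uses $\nabla\cdot u=0$. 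Your blanket justification --- ``these terms place at least one derivative on $u$, hence close by H\"older and anisotropic interpolation against $E_0$ and $E_1$'' --- fails for exactly this term: one derivative of $u$ in $L^\infty$ (or in any anisotropic norm) is only square-integrable in time, while $\|\p_1^2 b_2\|_{L^2}$ carries \emph{no} decay whatsoever (it is controlled neither by $\|\nabla u\|_{H^2}$ nor by $\|\p_2 b\|_{H^1}$), so $\int_0^t\|\nabla u\|_{H^2}\|\p_1^2 b_2\|_{L^2}^2\,d\tau$ cannot be bounded uniformly in $t$. This is precisely the same obstruction that makes the $M_4$-term wild, and it cannot be circumvented by integration by parts (that produces third derivatives of $b$, outside $H^2$).

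The paper's proof meets this head-on: in its decomposition (there via $\p_1^2$ and $\p_2^2$ only, using the norm equivalence $\|f\|_{H^2}^2\sim\|f\|_{L^2}^2+\sum_i\|\p_i^2 f\|_{L^2}^2$, which is a cosmetic difference from your multi-index setup), the term $M_5=-\sum_i\langle\p_i^2(u\cdot\nabla b),\p_i^2 b\rangle$ is reduced, using $\p_1 u_1=-\p_2 u_2$ and an integration by parts on the remaining pieces, to good terms plus $2\langle\p_2 u_2,|\p_1^2 b_2|^2\rangle$, and Proposition \ref{prop1} is then applied a \emph{second} time. So the repair of your argument is immediate --- route this second occurrence of the wild term through the same proposition --- but as written your proof would break at the step where you declare the transport commutator subcritical.
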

\begin{proof}
Due to the norm equivalence, namely
$$\|f\|_{H^2}^2 \sim \|f\|_{L^2}^2 + \sum_{i = 1}^2 \|\p_i^2 f\|_{L^2}^2,$$
it suffices to bound
$$\|(u, b)\|_{L^2} + \sum_{i = 1}^2 \|\p_i^2 (u, b)\|_{L^2}$$
instead of $E_0(t)$ itself.
Therefore, we can split the proof into two steps as follows.

\noindent $\bullet$ $L^2$ estimate \\
This is  the standard $L^2$ energy estimate for MHD systems. Multiplying the first equation in \eqref{mhd0} by $u$ and the second equation by $b$, then integrating over $\mathbb{R}^2$, one has 
\begin{equation}\nonumber
  \frac{1}{2}\frac{d}{dt} (\|u\|_{L^2}^2 + \|b\|_{L^2}^2)+ \|\nabla u\|_{L^2}^2 = 0.
\end{equation}

\noindent $\bullet$ Higher order norms estimate\\
It's easy to derive the following equation from the MHD equations \eqref{mhd0},
\begin{equation}\nonumber
\begin{split}
&\frac{1}{2}\frac{d}{dt} \sum_{i = 1}^2 (\|\p_i^2 u\|_{L^2}^2 + \|\p_i^2 b\|_{L^2}^2)  + \sum_{i = 1}^2 \|\p_i^2 \nabla u\|_{L^2}^2 \\
= & \; M_1 +M_2+M_3+M_4+M_5,
\end{split}
\end{equation}
where,
\begin{equation}\nonumber
  \begin{split}
    M_1 &= \sum_{i = 1}^2 <\p_i^2 \p_2 b, \p_i^2 u> + \sum_{i = 1}^2 <\p_i^2 \p_2 u, \p_i^2 b>,\\
    M_2 &= - \sum_{i = 1}^2 <\p_i^2 (u \cdot \nabla u), \p_i^2 u>,\\
    M_3 &= \sum_{i = 1}^2 <\p_i^2 (b \cdot \nabla b) - b \cdot \nabla \p_i^2 b, \p_i^2 u>, \\
    M_4 &= \sum_{i = 1}^2 <\p_i^2 (b \cdot \nabla u) - b \cdot \nabla \p_i^2 u, \p_i^2 b>, \\
    M_5 &= - \sum_{i = 1}^2 <\p_i^2 (u \cdot \nabla b), \p_i^2 b>.
  \end{split}
\end{equation}
For the first term $M_1$, integration by parts will lead to
\begin{equation}\label{m1}
  M_1 = \sum_{i = 1}^2 <\p_i^2 \p_2 b, \p_i^2 u> - \sum_{i = 1}^2 <\p_i^2 u, \p_i^2 \p_2 b> \; = 0.
\end{equation}
For the second term $M_2$, it's also trivial to get
\begin{equation}\label{m2}
M_2 \leq \; C \|u\|_{H^2} \|\nabla u\|_{H^2}^2.
\end{equation}
Using $\p_1 b_1 = - \p_2 b_2$, the third term $M_3$ can be written into
\begin{equation}\nonumber
\begin{split}
M_3=& <\p_2^2 (b \cdot \nabla b) - b \cdot \nabla \p_2^2 b, \p_2^2 u> + <\p_1^2 b \cdot \nabla b, \p_1^2 u> \\
& \qquad \qquad + 2<\p_1 b \cdot \nabla \p_1 b, \p_1^2 u>,
\end{split}
\end{equation}
which will be bounded furthermore by
\begin{equation}\nonumber
\begin{split}
 \; C\|b\|_{H^2}\|\p_2 b\|_{H^1}&\|\nabla u\|_{H^2} + <\p_1^2 b_1 \p_1 b, \p_1^2 u> + 2<\p_1 b_1 \p_1^2 b, \p_1^2 u>\\
 & + <\p_1^2 b_2 \p_2 b, \p_1^2 u> + 2<\p_1 b_2 \p_2 \p_1 b, \p_1^2 u>.
\end{split}
\end{equation}
Consequently, one has
\begin{equation}\label{m3}
 M_3 \leq  \; C\|b\|_{H^2}\|\p_2 b\|_{H^1}\|\nabla u\|_{H^2}.
\end{equation}
The estimate for $M_4$ needs more efforts. It's standard to obtain
\begin{equation}\nonumber
\begin{split}
M_4
=& <\p_2^2 (b \cdot \nabla u) - b \cdot \nabla \p_2^2 u, \p_2^2 b> + <\p_1^2 (b \cdot \nabla u) - b \cdot \nabla \p_1^2 u, \p_1^2 b> \\
\leq& \; C\|b\|_{H^2}\|\p_2 b\|_{H^1}\|\nabla u\|_{H^2} + <\p_1^2 b_1 \p_1 u + 2 \p_1 b_1 \p_1^2 u, \p_1^2 b> \\
& +  <\p_1^2 b_2 \p_2  u_1 + 2 \p_1 b_2 \p_2 \p_1 u_1, \p_1^2 b_1> \\
&+ <\p_1^2 b_2 \p_2  u_2 + 2 \p_1 b_2 \p_2 \p_1 u_2, \p_1^2 b_2>.
\end{split}
\end{equation}
Moreover, it can be bounded by
\begin{equation}\nonumber
\begin{split}
& \; C\|b\|_{H^2}\|\p_2 b\|_{H^1}\|\nabla u\|_{H^2} + <\p_1^2 b_2  \p_2 u_2, \p_1^2 b_2> + 2<\p_1^2 u_2, \p_1 b_2 \p_2 \p_1 b_2> \\
\leq& \; C\|b\|_{H^2}\|\p_2 b\|_{H^1}\|\nabla u\|_{H^2} + <\p_2 u_2, |\p_1^2 b_2|^2>.
\end{split}
\end{equation}
Notice here we have trouble when deriving the decay information for $\|\p_1^2 b_2\|_{L^2}$, it's then hard for us to give the estimate for the wildest term $<\p_2 u_2, |\p_1^2 b_2|^2>$ in above inequality.
Fortunately, we succeed in transferring this term to some other good terms using the inherent structure of system concerned.
Applying Proposition \ref{prop1}, it's easy to obtain
\begin{equation}\label{m4}
\begin{split}
  \int_0^t M_4 \; d\tau \leq &\; CE_0E_1^\frac{1}{2} +  CE_0 ( A_+ + A_-)^\frac{1}{2}\\
   &+ C (E_0^\frac{1}{2}+ E_0 + E_0^\frac{3}{2})(E_0 + E_1).
\end{split}
\end{equation}
Now, we turn to deal with the last term $M_5$. Using the divergence free condition $\nabla \cdot u = \nabla \cdot b = 0$, it's obvious that
\begin{equation}\nonumber
  \begin{split}
  M_5 = & \; -  <\p_2^2 (u \cdot \nabla b), \p_2^2 b> - <\p_1^2 (u \cdot \nabla b_1) , \p_1^2 b_1> \\
 & - <\p_1^2 (u \cdot \nabla b_2) , \p_1^2 b_2>,
\end{split}
\end{equation}
which can be bounded by
\begin{equation}\nonumber
\begin{split}
    & \; C \|b\|_{H^2} \|\p_2 b\|_{H^1} \|\nabla u\|_{H^2} - <\p_1^2 u_2 \p_2 b_2 , \p_1^2 b_2> - 2 <\p_1 u_2 \p_1 \p_2 b_2 , \p_1^2 b_2> \\
    & \qquad \qquad - <\p_1^2 u_1 \p_1 b_2 , \p_1^2 b_2> - 2 <\p_1 u_1 \p_1^2 b_2 , \p_1^2 b_2> \\
    \leq& \; C \|b\|_{H^2} \|\p_2 b\|_{H^1} \|\nabla u\|_{H^2} + <\p_1 \p_2 u_2  , \frac{1}{2}\p_1 |\p_1 b_2|^2> + 2 <\p_2 u_2 , |\p_1^2 b_2|^2> \\
    \leq& \;C \|b\|_{H^2} \|\p_2 b\|_{H^1} \|\nabla u\|_{H^2} + \frac{1}{2} <\p_1^2 u_2  , \p_2|\p_1 b_2|^2> + 2 <\p_2 u_2 , |\p_1^2 b_2|^2> \\
    \leq& \; C \|b\|_{H^2} \|\p_2 b\|_{H^1} \|\nabla u\|_{H^2} + 2 <\p_2 u_2 , |\p_1^2 b_2|^2>.
  \end{split}
\end{equation}
Applying Proposition \ref{prop1} again, we achieved the following time integral estimate for $M_5$ 
\begin{equation}\label{m5}
  \begin{split}
    \int_0^t M_5 \; d\tau \leq & \; CE_0E_1^\frac{1}{2} +  CE_0 ( A_+ + A_-)^\frac{1}{2} \\
    & + C (E_0^\frac{1}{2}+ E_0 + E_0^\frac{3}{2})(E_0 + E_1).
  \end{split}
\end{equation}
Combining all the estimate for $M_1 \thicksim M_5$ namely \eqref{m1}, \eqref{m2},  \eqref{m3}, \eqref{m4} and \eqref{m5},, we complete the proof of this Lemma.

\end{proof}

The next lemma provides the decay information for the key term $\|\p_2 b\|_{H^1}$.
\begin{lemma}
For $A_{mp}$, $E_0$ and $E_1$ defined in \eqref{energy}, it holds that
  \begin{equation}\nonumber
  \begin{split}
    E_1(t) = & \int_0^t \|\p_2 b\|_{H^1}^2 \; d\tau  \\
    \leq & \; C \Big\{ E_0 + E_0^\frac{1}{2}E_1^\frac{1}{2} + E_0^\frac{1}{2}(E_0 + E_1) + (A_+^\frac{1}{2} + A_-^\frac{1}{2})E_1^\frac{1}{2} \Big\}.
    \end{split}
  \end{equation}
\end{lemma}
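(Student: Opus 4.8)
The plan is to recover the missing dissipation of $\p_2 b$ from the momentum equation, where $\p_2 b$ appears as a forcing term, via the standard cross-term (multiplier) technique. Concretely, for each multi-index $\alpha$ with $|\alpha|\le 1$ I would differentiate the velocity equation in \eqref{mhd0}, pair it with $\p^\alpha\p_2 b$, and follow the evolution of $\langle\p^\alpha u,\p^\alpha\p_2 b\rangle$. Differentiating in time and substituting both equations of \eqref{mhd0}, the forcing term $\p_2 b$ produces exactly $\|\p^\alpha\p_2 b\|_{L^2}^2$ with a favorable sign, while the induction term $\p_2 u$ contributes a harmless $-\|\p^\alpha\p_2 u\|_{L^2}^2$, and the pressure contribution vanishes after integration by parts since $\nabla\cdot b=0$. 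Summing over $|\alpha|\le 1$ and integrating over $[0,t]$ then yields an identity of the schematic form
$$\int_0^t\|\p_2 b\|_{H^1}^2\,d\tau = \sum_{|\alpha|\le 1}\langle\p^\alpha u,\p^\alpha\p_2 b\rangle\Big|_0^t + \int_0^t\|\p_2 u\|_{H^1}^2\,d\tau - \sum_{|\alpha|\le 1}\int_0^t\langle\p^\alpha\Delta u,\p^\alpha\p_2 b\rangle\,d\tau - (\text{nonlinear terms}).$$

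Next I would dispose of the linear pieces. The boundary terms are bounded by $\|u\|_{H^2}\|b\|_{H^2}\lesssim E_0$, and $\int_0^t\|\p_2 u\|_{H^1}^2\,d\tau\le\int_0^t\|\nabla u\|_{H^2}^2\,d\tau\le E_0$. The viscous pairing is handled by Cauchy--Schwarz in space and time: $\sum_{|\alpha|\le1}\int_0^t\langle\p^\alpha\Delta u,\p^\alpha\p_2 b\rangle\,d\tau$ is at most $(\int_0^t\|\nabla u\|_{H^2}^2)^{1/2}(\int_0^t\|\p_2 b\|_{H^1}^2)^{1/2}\le E_0^{1/2}E_1^{1/2}$. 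This already accounts for the $E_0$ and $E_0^{1/2}E_1^{1/2}$ terms in the claimed bound.

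It remains to treat the nonlinear terms. The generic contributions --- from $u\cdot\nabla u$, $u\cdot\nabla b$, the induction term $b\cdot\nabla u$, and the parts of the Lorentz force $b\cdot\nabla b$ in which a $\p_2$-derivative can be placed on $b$ --- are all cubic and are absorbed into $E_0^{1/2}(E_0+E_1)$ by taking the $L^\infty\hookleftarrow H^2$ norm of one factor (yielding a $\sup$-bound $\lesssim E_0^{1/2}$) and time-integrating the remaining two factors, each of which carries a dissipative derivative controlled by $\int_0^t(\|\nabla u\|_{H^2}^2+\|\p_2 b\|_{H^1}^2)\,d\tau\le E_0+E_1$; here the incompressibility relation $\p_1 b_1=-\p_2 b_2$ is used repeatedly to convert stray $\p_1 b_1$ factors into the $E_1$-favorable $\p_2 b_2$. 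The single genuinely dangerous term survives from $\langle\p_1(b\cdot\nabla b),\p_1\p_2 b\rangle$ when both spatial derivatives fall on $b$ through $b_1\p_1^2 b$; its worst component is $\int_0^t\int_{\mathbb{R}^2} b_1\,\p_1^2 b_2\,\p_1\p_2 b_2\,dx\,d\tau$, in which the factor $\p_1^2 b_2$ carries no $\p_2$-derivative and so cannot be absorbed by $E_1$ alone. This is precisely the obstruction anticipated in the introduction, and it is the main difficulty of the lemma.

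To control it I would invoke the Alfvén structure exactly as set up in \eqref{G}--\eqref{energy}: writing $b_1=\tfrac12(\Omega_+-\Omega_-)$ and using $|b_1|^2\le\tfrac12(|\Omega_+|^2+|\Omega_-|^2)$, Cauchy--Schwarz splits the dangerous term as
$$\int_0^t\int_{\mathbb{R}^2} b_1\,\p_1^2 b_2\,\p_1\p_2 b_2\,dx\,d\tau\le\Big(\int_0^t\int_{\mathbb{R}^2}(|\Omega_+|^2+|\Omega_-|^2)|\p_1^2 b_2|^2\Big)^{\frac12}\Big(\int_0^t\int_{\mathbb{R}^2}|\p_1\p_2 b_2|^2\Big)^{\frac12},$$
where the first factor is $\lesssim(A_++A_-)^{1/2}\le A_+^{1/2}+A_-^{1/2}$ and the second is $\le E_1^{1/2}$, producing the final $(A_+^{1/2}+A_-^{1/2})E_1^{1/2}$ term. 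Collecting the linear bounds, the generic cubic bounds, and this last estimate gives precisely the claimed inequality. The hard part is the recognition and treatment of this wild term; note that $A_\pm$ and $E_1$ are deliberately left on the right-hand side, so the lemma is not self-closing --- it is only in Section~\ref{energy-est} and Section~4, after combining it with Lemma~\ref{lem0} and the basic-energy lemma and invoking the smallness of $E_0$, that the total energy $\mathscr{E}(t)$ can be closed.
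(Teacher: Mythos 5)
Your proposal is correct and follows essentially the same route as the paper: both extract the $\p_2 b$ dissipation from the momentum equation via the cross term $\langle \p^\alpha u,\p^\alpha\p_2 b\rangle$ (the paper phrases this as solving the velocity equation for $\p_2 b$ and pairing, which is algebraically the same computation), bound the generic cubic terms using $\p_1 b_1=-\p_2 b_2$, and isolate the same dangerous term $\int_0^t\int b_1\,\p_1^2 b_2\,\p_1\p_2 b_2$, which is handled identically by Cauchy--Schwarz together with $b_1=\tfrac12(\Omega_+-\Omega_-)$ to produce $(A_+^{1/2}+A_-^{1/2})E_1^{1/2}$. The only cosmetic difference is that you annihilate the pressure pairing exactly via $\nabla\cdot b=0$, while the paper simply absorbs it into the nonlinear bound for $Q_2$ and $Q_5$.
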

\begin{proof}
Using the following identify
\begin{equation}\label{p2b}
  \p_2 b = u_t + u \cdot \nabla u - \Delta u + \nabla P - b \cdot \nabla b ,
\end{equation}
we split the estimate for $E_1$ into lower order terms part and higher order terms part as before.\\
$\bullet$ Lower order norms estimate \\
By \eqref{p2b} we can write
\begin{equation}\nonumber
\begin{split}
  \|\p_2 b\|_{L^2}^2  =&  < u_t + u \cdot \nabla u - \Delta u + \nabla P - b \cdot \nabla b,  \p_2 b>\\
   = & \; Q_1 +Q_2 +Q_3,
\end{split}
\end{equation}
where
\begin{equation}\nonumber
\begin{split}
  Q_1 =& < u_t,  \p_2 b>,\\
  Q_2 =&<u \cdot \nabla u - \Delta u + \nabla P,  \p_2 b>, \\
  Q_3 =& \; - <  b \cdot \nabla b,  \p_2 b>.
\end{split}
\end{equation}
The estimate process for $Q_1$, $Q_2$ and $Q_3$ are almost the same like before. Therefore, we give the following estimates directly.
\begin{equation}\label{q1}
\begin{split}
  Q_1 =& \; \frac{d}{dt} < u,  \p_2 b> +  < u, \p_2 (-u \cdot \nabla b + b \cdot \nabla u + \p_2 u)> \\
  \leq &  \; \frac{d}{dt} <u, \p_2 b> + C \|\nabla u\|_{L^2}^2 \\
  & + C (\|u\|_{H^2} + \|b\|_{H^2})(\|\p_2 b\|_{H^1}^2 + \|\nabla u\|_{H^2}^2),
\end{split}
\end{equation}
and,
\begin{equation}\label{q2}
\begin{split}
  Q_2 \leq \; C(\|\nabla u\|_{H^1} + \|u\|_{H^2} \|\nabla u\|_{H^2}) \|\p_2 b\|_{L^2},
\end{split}
\end{equation}
moreover,
\begin{equation}\label{q3}
\begin{split}
  Q_3 =& \; -< ( b_2 \p_2 b) ,  \p_2 b> -  < ( b_1 \p_1 b) ,  \p_2 b> \\
  \leq & \; \|b\|_{H^2} \|\p_2 b\|_{L^2}^2 + \|b_1\|_{L^2}^\frac{1}{2}\|\p_1 b_1\|_{L^2}^\frac{1}{2} \|\p_1 b\|_{L^2}^\frac{1}{2}\|\p_2 \p_1 b\|_{L^2}^\frac{1}{2}\|\p_2 b\|_{L^2} \\
  \leq & \; C \|b\|_{H^2} \|\p_2 b\|_{H^1}^2.
\end{split}
\end{equation}
$\bullet$ Higher order norms estimate \\
Similarly, we can write
\begin{equation}\nonumber
\begin{split}
  \sum_{i = 1}^2 \|\p_i \p_2 b\|_{L^2}^2 =& \sum_{i = 1}^2 <\p_i (u_t + u \cdot \nabla u - \Delta u + \nabla P - b \cdot \nabla b) , \p_i \p_2 b>\\
   = & \; Q_4 + Q_5 +Q_6,
\end{split}
\end{equation}
where
\begin{equation}\nonumber
\begin{split}
  Q_4 =& \sum_{i = 1}^2 <\p_i u_t, \p_i \p_2 b>,\\
  Q_5 =& \sum_{i = 1}^2 <\p_i (u \cdot \nabla u - \Delta u + \nabla P), \p_i \p_2 b>, \\
  Q_6 =& -\sum_{i = 1}^2 <\p_i ( b \cdot \nabla b) , \p_i \p_2 b>.
\end{split}
\end{equation}
The estimate for $Q_4$ and $Q_5$ is trivial, we can directly get
\begin{equation}\label{q4}
\begin{split}
  Q_4 =& \sum_{i = 1}^2 \frac{d}{dt} <\p_i u, \p_i \p_2 b> + \sum_{i = 1}^2 <\p_i^2 u, \p_2 (-u \cdot \nabla b + b \cdot \nabla u + \p_2 u)> \\
  \leq & \sum_{i = 1}^2 \frac{d}{dt} <\p_i u, \p_i \p_2 b> + C \|\nabla u\|_{H^2}^2 \\
   & + C (\|u\|_{H^2} + \|b\|_{H^2})(\|\p_2 b\|_{H^1}^2 + \|\nabla u\|_{H^2}^2),
\end{split}
\end{equation}
and,
\begin{equation}\label{q5}
\begin{split}
  Q_5 \leq C(\|\nabla u\|_{H^2} + \|u\|_{H^2} \|\nabla u\|_{H^2}) \|\p_2 b\|_{H^1}.
\end{split}
\end{equation}
While for the last term $Q_6$, we divide it into five parts
\begin{equation}\nonumber
\begin{split}
  Q_6 =& -\sum_{i = 1}^2 <\p_i ( b_2 \p_2 b) , \p_i \p_2 b> -  <\p_2 ( b_1 \p_1 b) , \p_2 \p_2 b> \\
  & - <\p_1 b_1 \p_1 b , \p_1 \p_2 b> - < b_1 \p_1^2 b_1 , \p_1 \p_2 b_1>\\
  &  - <b_1 \p_1^2 b_2 , \p_1 \p_2 b_2>.
\end{split}
\end{equation}
The first four terms in above equality can be easily bounded by $\|b\|_{H^2} \|\p_2 b\|_{H^1}^2$. And for the last term, applying H\"{o}lder's inequality yields that
\begin{equation}\label{q6}
  \begin{split}
   & \int_0^t \int_{\mathbb{R}^2} b_1 \p_1^2 b_2 \p_1 \p_2 b_2 \; dx \; d\tau \\
    \leq & \; \big(\int_0^t \int_{\mathbb{R}^2} |b_1|^2 |\p_1^2 b_2|^2 \; dx \;d\tau\big)^\frac{1}{2} \big(\int_0^t \int_{\mathbb{R}^2} |\p_1\p_2 b_2|^2  \; dx \;d\tau\big)^\frac{1}{2} \\
    \leq & \; (A_+^\frac{1}{2} + A_-^\frac{1}{2})E_1^\frac{1}{2}.
  \end{split}
\end{equation}
Combining the estimate for  $Q_1 \sim Q_6$ together, namely \eqref{q1}, \eqref{q2}, \eqref{q3}, \eqref{q4}, \eqref{q5} and \eqref{q6}, notice the following equivalence
$$\|\p_2 b\|_{H^1}^2 \sim \|\p_2 b\|_{L^2}^2 + \sum_{i = 1}^2 \|\p_i \p_2 b\|_{L^2}^2,$$
we then obtain
\begin{equation}\nonumber
  \begin{split}
    \int_0^t \|\p_2 b\|_{H^1}^2 \; d\tau \leq & C \big\{ E_0(t) + E_0^\frac{1}{2}E_1^\frac{1}{2} + E_0^\frac{1}{2}(E_0 + E_1) + (A_+^\frac{1}{2} + A_-^\frac{1}{2})E_1^\frac{1}{2} \big\}.
  \end{split}
\end{equation}
It then finishes the proof of this lemma.
\end{proof}
\vskip .3in
\section{The proof of theorem \ref{thm}}

This section completes the proof of Theorem \ref{thm}, which can be achieved by applying the bootstrapping argument
to the energy inequalities obtained in the previous lemmas.

As aforementioned in the introduction, the local well-posedness theorem
of system (\ref{mhd}) in $H^2$ space follows from a standard procedure (see, e.g., \cite{MaBe}). Therefore our
attention is exclusively focused on the global bound {of $H^2$-norms}. This is accomplished
by the bootstrapping argument. The key components are the energy inequalities established previously
in Sections \ref{energy-est},
  \begin{equation}\nonumber
  \begin{split}
    A_{\pm}(t) \leq& \; C \big( E_0(t) + E_0^\frac{3}{2}(t)\big)\big(E_0(t) + E_1(t)\big) + C E_0^\frac{1}{2}(t) A_{\pm}(t) + C E_0(t) A_{\pm}^\frac{1}{2}(t), \\
    E_0(t) \leq& \; CE_0(0) + CE_0(t)E_1^\frac{1}{2}(t) +  CE_0(t) \big( A_+(t) + A_-(t)\big)^\frac{1}{2} \\
    &\;+ C \big(E_0^\frac{1}{2}(t)+ E_0(t) + E_0^\frac{3}{2}(t)\big)\big(E_0(t) + E_1(t)\big),\\
     E_1(t) \leq& C E_0(t) + C E_0^\frac{1}{2}(t)E_1^\frac{1}{2}(t) + C E_0^\frac{1}{2}(t)\big(E_0(t) + E_1(t)\big) \\
     & + C\big(A_+^\frac{1}{2}(t)+ A_-^\frac{1}{2}(t)\big)E_1^\frac{1}{2}(t).
  \end{split}
  \end{equation}
Applying Young's inequality, there exists a positive constant $C_0 \ge 1$ and for any $t>0$ there holds
\begin{equation}\label{en}
		{\mathscr{E} (t)}\le C_0\, \mathscr{E}(0) +C_0 \,\mathscr{E}^{\frac32}(t)
		+ C_0\, \mathscr{E}^2 (t)+ C_0 \,\mathscr{E}^{\frac52}(t),
\end{equation}
where $\mathscr{E}(t) = E_0(t) + E_1(t) + A_-(t) + A_+(t)$ was defined in section 1.  Applying the bootstrapping argument to (\ref{en}) implies that, if $\|(u_0, b_0)\|_{H^2}$ is
sufficiently small, say
\begin{equation}\nonumber
\mathscr{E}(0) \le \frac1{128 C^3_0}\quad \mbox{or}\quad  \|(u_0, b_0)\|_{H^2} \le \epsilon :=\frac1{\sqrt{128 C_0^3}},
\end{equation}
then for any $t>0$ we have
$$
	\mathscr{E}(t)\le \frac1{32 C^2_0}, \quad \mbox{especially}\quad \|(u(t), b(t))\|_{H^2} \le 2 \sqrt{C_0}\, \epsilon.
$$
Here we have used the initial data assumption \eqref{initial}.
In fact, if we make the ansatz that
\begin{equation}\label{ans}
\mathscr{E}(t)\le \frac1{16 C^2_0}
\end{equation}
and insert (\ref{ans}) in (\ref{en}), we  shall finally obtain
$$
\mathscr{E}(t) \le \frac{1}{128C_0^2} + \frac{1}{64C_0^2} + \frac{1}{256C_0^2} + \frac{1}{1024C_0^2} \le \frac{1}{32 C^2_0}.
$$
This completes the proof of Theorem \ref{thm}.

\vskip .3in
\centerline{\bf Acknowledgments}

\vskip .1in
Shijin Ding is supported by the Key Project of National Natural Science Foundation of China (No. 12131010), the National Natural Science Foundation of China (No. 12271032).
Ronghua Pan is supported by National Science Foundation (No. DMS-2108453).
Yi Zhu is supported by the National Natural Science Foundation of China (No. 12271160).

\vskip .4in

\end{document}